\newcommand{\prob}[1]{\mathbb{P}\left( #1 \right)}
\newcommand{\E}{\mathbb{E}}
\newcommand{\Var}{{\rm Var}}
\newcommand{\Cov}{{\rm Cov}}
\newcommand{\Abs}[1]{\left\vert #1 \right\vert}
\newcommand{\norm}[1]{\Vert #1\Vert_L}
\newcommand{\clt}{Central Limit Theorem}
\newcommand{\slln}{Strong Law of Large Numbers}
\newcommand{\sllns}{Strong Laws of Large Numbers}
\newtheorem{theorem}{Theorem}
\newtheorem{definition}[theorem]{Definition}
\newtheorem{example}[theorem]{Example}
\newtheorem{lemma}[theorem]{Lemma}
\newtheorem{remark}{Remark}
\newenvironment{proof}[1][Proof]{\noindent\textbf{#1.} }{\ \rule{0.5em}{0.5em}}
\begin{document}

\title{Asymptotic Results for Certain Weak Dependent Variables}
\author{Idir ARAB and Paulo Eduardo Oliveira \\
%EndAName
CMUC, Department of Mathematics, University of Coimbra, Portugal\\
e-mail: idir@mat.uc.pt\\
paulo@mat.uc.pt}

\date{ }
\maketitle

\begin{abstract}
We consider a special class of weak dependent random variables with control on covariances of Lipschitz transformations. This class includes, but is not limited to, positively, negatively associated variables and a few other classes of weakly dependent structures. We prove a \slln\ with a characterization of convergence rates which is almost optimal, in the sense that it is arbitrarily close to the optimal rate for independent variables. Moreover, we prove an inequality comparing the joint distributions with the product distributions of the margins, similar to the well known Newman's inequality for characteristic functions of associated variables. As a consequence, we prove a \clt\ together with its functional counterpart, and also the convergence of the empirical process for this class of weak dependent variables.\\

\textbf{Keywords: } Central Limit Theorems, Convergence rate, L-weak dependence, Strong law of large numbers.\\

\textbf{MSC: }60F10;60F05; 60F17.
\end{abstract}
\ \ *This work was partially supported by the Centre for Mathematics of the University of Coimbra -- UID/MAT/00324/2013, funded by the Portuguese Government through FCT/MEC and co-funded by the European Regional Development Fund through the Partnership Agreement PT2020.
\section{Introduction}
Limit theorems, either with respect to almost sure convergence or convergence in distribution are a central subject in statistics. In more recent years, many authors were interested on the asymptotics for dependent sequences of variables. Several forms of controlling the dependence have been proposed, many of them describing a control on covariances of transformations of variables.
Mostly, this control may be thought of as measuring the degree of dependence  between a past and a sufficiently separated future.
%that may be thought as representing the past with transformations of another set of variables representing the future.
These dependence structures are commonly named weak dependence,
and are described using specific families of transformations of the random variables. We refer the reader to Doukhan and Louhichi~\cite{DL99} or Dedecker et al.~\cite{Ded07} for some examples and relations between such dependence notions.
Many of these notions stemmed from the positive dependence and association introduced by Lehmann~\cite{Leh66} and Esary, Proschan and Walkup~\cite{EPW67}, respectively. Association was the first of these two notions to attract the interest of researchers, and as expected, \sllns\ and \clt s were eventually proved. We refer the reader to the monographs by Bulinski and Shashkin~\cite{BS07}, Oliveira~\cite{Oli12} or Prakasa Rao~\cite{PR12} for an account of relevant literature. Inevitably, several variations and extensions of these dependence notions were introduced and limit theorems were established. Among these, the negative association defined by Joag-Dev and Proschan~\cite{JP83} was one of the most popular, with various different extensions introduced in more recent years: extended negative dependent (END) introduced by Liu~\cite{L09}, widely orthant dependent (WOD) introduced by Wang, Wang and Gao~\cite{WWG13} among other variations.
We will be interested in a particular version of weak dependence defined in the same spirit as in Doukhan and Louhichi~\cite{DL99}, instead of a direct variation on the inequalities that express the positive or the negative dependence.

The proof techniques for the dependence structures mentioned rely essentially on an adequate control of the covariances between appropriate families of transformations of the random variables. Thus, it was natural to define the dependence control through some upper bound of a convenient family of covariances, usually characterized by a suitable family of transformations of the variables, leading to the weak dependence notions, as introduced by Doukhan and Louhichi~\cite{DL99}.
Different dependence notions are defined by considering distinct families of transformations.
%Depending on the family and the control form, these authors introduced different dependence notions.
For an account on some fo these dependence structures and their relations, we refer the reader to the monograph by Dedecker et al.~\cite{Ded07}.

In this paper, we will be interested in a particular version of weak dependence, somewhat similar to the quasi-association as introduced in Bulinski and Suquet~\cite{BulSuq01}, that includes the positive, negative dependence notions referred above and the quasi-association.
We will also provide nontrivial examples showing that the inclusions between these classes of dependent variables is strict.
For the weak dependence notion we are defining, we will prove a \slln, with a characterization of rates for both bounded and unbounded random variables, a \clt\ and an invariance principle. %It is worth noting that, although we are weakening the dependence assumption, we can obtain the almost optimal convergence rates that required a lot of effort to prove in stronger dependence structures.
%Limit theorems for general weak dependence seem to be scarce in the literature.
%Concerning weak dependence notions, some asymptotic results are proved in Doukhan and Louhichi~\cite{DL99}.
We should compare the results proved here with the ones already available in the literature for the various dependence structures. As what concerns convergence in distribution, our results are of similar strength, essentially only providing a unified approach to the different frameworks. For the almost sure convergence, the assumptions and the derived rates are again similar to most of the known results for negatively or positively associated variables.
%comment
%I was trying to be more emphatic about the existence of results of this nature. Nut if you prefer like this, I do not insist!
However,
for weak dependent families of variables,
the only inequality controlling tail probabilities (see Corollary~1 in \cite{DL99}) is a Bernstein type inequality, that has a relatively weak form. Later, Corollary~4.1 and Theorem~4.5 in \cite{Ded07} and Kallabis and Neumann~\cite{KN06} also prove exponential inequalities that are analogous to the Bernstein inequalities, but again with weaker exponents in their upper bounds. This means that although \sllns\ may be derived, not only the assumptions will become stronger, but convergence rates that follow will not be almost optimal, in the sense that these rates may be arbitrarily close to the well known rates for independent variables. In the present paper, the version of weak dependence we will be studying allows for the adaptation of techniques used for associated variables (see, for example, Ioannides and Roussas~\cite{IR98}, Oliveira~\cite{Oli05}, Sung~\cite{Sung07}) providing stronger forms of the Bernstein-type inequality, meaning that we will obtain almost optimal convergence rates. %As what concerns the asymptotic normality of sums of variables, some contributions for some variants of weakly dependent variable may be found in Doukhan and Louhichi~\cite{DL99}.

The paper is organized as follows: Section~2 defines the framework, Section~3 proves some basic inequalities needed for the control of the almost sure convergence, which is the object of Section~4, where \sllns\ for bounded and unbounded random variables, with characterization of rates, are proved.  Finally, in Section~5, we extend the Newman inequality for characteristic functions to the present dependence structure, from which a \clt, an invariance principle and the convergence of the empirical process follow.

\section{Definitions and framework}

Let $X_n$, $n\geq 1$, be centered random variables and define $S_n=X_1+\cdots+X_n$. As mentioned before, we will be interested in a particular form of weak dependence, according to the following definition.
\begin{definition}
\label{def:weak}
The random variables $X_i$, $i=1,\ldots,n$, is said to be L-weakly dependent if there exist nonnegative coefficients $\gamma_k$, $k\geq 1$, such that for every disjoint subsets $I,J\subset\{1,\ldots,n\}$ and real valued Lipschitz functions $f$ and $g$, defined on the appropriate Euclidean spaces, %$\mathbb{R}^{\# I}$ and $\mathbb{R}^{\#J}$, respectively,
the following inequality is satisfied:
$$
\Abs{\Cov\left( f\left( X_{i},i\in I\right), g\left( X_{j},j\in J\right) \right)}
\leq \norm{f}\norm{g}\sum_{i\in I}\sum_{j\in J}\gamma_{\Abs{j-i}},
%\left\vert\Cov\left( X_{i},\ X_{j}\right) \right\vert,
$$
where $\norm{f}$ represents the Lipschitz norm of $f$:
$$
\norm{f}=\sup_{x\ne y}\frac{\Abs{f(x)-f(y)}}{\Abs{x-y}}.
$$
An infinite family of random variables is said to be L-weakly dependent if every finite subfamily is L-weakly dependent and the coefficients define a convergent series. %quasi-associated.
\end{definition}
This is a form of weak dependence in the same spirit as in Doukhan and Louhichi~\cite{DL99} or Dedecker et al.~\cite{Ded07}. With respect to the discussion in \cite{Ded07}, this dependence follows from what these authors called the $\kappa$ or the $\zeta$ coefficients. This means the examples of L-weakly dependent sequences include positively associated, negatively associated, Gaussian sequences or models for interacting particles systems (see Section~3.5.3 in \cite{Ded07} for details for this last example). Moreover, the notion of quasi-association, introduced by Bulinski and Suquet~\cite{BulSuq01}, is also included in the L-weak dependence structure by choosing $\gamma_k=\Cov(X_1,X_{k+1})$, of course assuming the stationarity of the random variables.
%if we assume that the random variables are stationary, choosing $\gamma_k=\Cov(X_1,X_{k+1})$, the L-weak dependence corresponds to the quasi-association introduced in Bulinski and Suquet~\cite{BulSuq01}.
%It follows from Bulinski's inequality (see, for example, Theorem~1.5.3 in Bulinski and Shashkin~\cite{BS07}) that positively or negatively associated random variables are $L$-weakly dependent, %quasi-associated,
%so we are dealing with a broader dependence notion.
The inclusion between these families of dependent variables is strict, as we will be showing by exhibiting a few examples.
\begin{example}
Let $\xi_n$, $n\in\mathbb{Z}$, be a sequence of independent random variables with variances $\sigma_n^2$. Given $p\geq 1$ and $\alpha_1,\ldots,\alpha_p\in\mathbb{R}$, define, for each $n\geq 1$, $X_n=\sum_{j=1}^{p}\alpha_j \xi_{n-j}$. It is well known that the sequence $X_n$ is positively associated if and only if the $\alpha_i$ all have the same sign. Now, if we choose the coefficients $\alpha_1$ and $\alpha_p$ positive, and $\alpha_2$, and $\alpha_{p-1}$ negative, it follows that
$$
\begin{array}{l}
\Cov(X_n,X_{n+p-1})=\alpha_1\alpha_p\sigma_{n-1}^2>0, \\
\Cov(X_n,X_{n+p-2})=\alpha_1\alpha_{p-1}\sigma_{n-1}^2+\alpha_2\alpha_{p}\sigma_{n-2}^2<0.
\end{array}
$$
Hence, the sequence $X_n$, $n\geq 1$, is neither negatively associated nor positively associated. However, it is easily verified that it is quasi-associated.
\end{example}
\begin{remark}
As composition of Lipshcitz functions is still Lipschitzian, and quasi-associated variables are L-weak dependent, it follows that Lipschitz transformations of quasi-associated variables are L-weak dependent. However, as shown by the following example, the transformed variables are not necessarily quasi-associated.
\end{remark}
\begin{example}
\label{ex:LL}
Let $\xi_n$, $n\geq 1$, be a sequence of independent and identically distributed random variables, $\alpha_n$, $n\geq 1$, a sequence of real numbers, and define, for each $n\geq 1$, $X_n=\sum_{i=1}^n\alpha_i\xi_i$. Taking all the coefficients positive, this sequence is positively associated, therefore, also quasi-associated. Consider a Lipschitz function $g$ such that $g(x+y)=g(x)g(y)$ and denote $f=g^{-1}$, that is assumed to be also Lipschitzian. Finally, define $Y_n=g(X_n)$. It is now easily verified that $\Cov(X_1,X_2)=\alpha_1^2\Var(\xi_1)$, while
$
\Cov(Y_1,Y_2)=\E(g(\alpha_2\xi_2))\Var(g(\alpha_1\xi_1)).
$
If we choose the common distribution of the $\xi_n$ and the function $g$ such that $\lim_{\alpha_1\rightarrow\infty}\Var(g(\alpha_1\xi_1))=0$, it follows that the inequality
\begin{equation}\label{ineq_Lip}
  \Cov(X_1,X_2)=\Cov(f(g(X_1)),f(g(X_2)))\leq \norm{f}^2\Cov(g(X_1),g(X_2))
\end{equation}
cannot be fulfilled, at least for $\alpha_1$ large enough. Therefore the random variables $Y_n=g(X_n)$, $n\geq 1$, cannot be quasi-associated.
\end{example}
%
%To exhibit more concrete constructions, choose $g(x)=e^{-x}$. In order to keep the Lipschitz property of $f(x)=g^{-1}(x)=-\log x$ we need to consider random variables
\begin{example}
A concrete example may be obtained taking $g(x)=e^{-x}$ and the $\xi_n$ uniform on some closed interval. Note that, although $g(x)$ and $f(x)=g^{-1}(x)=-\log x$ are not Lipschtizian in all their domain, they are Lipschitz in the support of the variables to which we will be applying and, as we will be computing expectations, this is enough to characterize the L-weak dependence. The uniform distribution is just an easily verifiable example. Other distributions may be considered. In fact, representing by $M_\xi$ the moment generating function of the initial random variables $\xi_n$, we have that $\Var(g(\alpha_1\xi_1))=M_\xi(-2\alpha_1)-M_\xi^2(\alpha_1)$ and this converges to 0 under the assumption $\lim_{\alpha\rightarrow\infty}M_\xi(\alpha)=0$. Besides, to have the Lipschitizianity of the transformations considered, the $\xi_n$ should have a compact support.
\end{example}
\begin{example}
Another concrete construction based on Example~\ref{ex:LL} may be obtained choosing $g(x)=e^{-x^2/\beta}+x$, where $\beta >0$. This a strictly increasing and Lipschitz function in the whole real line. Although it is invertible, we do not have an explicit expression for $g^{-1}$. Assume the $\xi_n$, $n\geq 1$, are independent and nonnegative valued, and, as above, take $X_n=\sum_{i=1}^n\alpha_i\xi_i$, where $\alpha_i>0$. This implies that the sequence $X_n$, $n\geq 1$, is associated. Moreover, it is easily seen verified that $\Cov(X_1,X_2)=\alpha_1^2\Var(\xi_1)$.
Consider now
\begin{eqnarray*}
\lefteqn{\Cov(g(X_1),g(X_2))} \\
 & &=\alpha_1^2\Var(\xi_1)+\Cov\left(e^{-\alpha_1^2\xi_1^2}+\alpha_1\xi_1,e^{-(\alpha_1\xi+\alpha_2\xi_2)^2}\right) \\
 & &\qquad
   +\alpha_1\Cov\left(\xi_1,e^{-(\alpha_1\xi+\alpha_2\xi_2)^2}\right).
\end{eqnarray*}
The first covariance above considers an increasing transformation of the $(\xi_1,\xi_2)$ and a decreasing transformation of the same random vector. So the association of the vector implies that this term is negative. The same argument applies to the second covariance. If the random variables $X_1$ and $X_2$ are to be quasi-associated, then (\ref{ineq_Lip}) must be fulfilled. For the present construction, we have
$$\norm{f}=\norm{g^{-1}}=\frac{
1}{1-\sqrt{2/\beta}e^{-1/2}}.
$$
Now, choosing $\beta$ large enough and taking into account the previous comments, (\ref{ineq_Lip}) will not be verified. Hence, for such choice of the parameters, the random variables $X_1$ and $X_2$ cannot be quasi-associated. However, being Lispchitz transformations of independent variables, they are L-weak dependent.
\end{example}

We will be assuming throughout this paper that
\begin{equation}\label{eq:sigma}
\frac{1}{n}\E S_n^2\longrightarrow\sigma^2\in(0,\infty).
\end{equation}
%and we denote by
%\begin{equation}
%  D=\sum_{j=1}^{\infty}\gamma_j
%\end{equation}
\begin{remark}
This condition follows immediately from the convergence of the series of L-weak dependence coefficients $\gamma_k$ (see Lemma 1.1 in Rio~\cite{Rio2013}).
\end{remark}
This, obviously, implies that for $n$ large enough, we have $\E S_n^2\leq 2\sigma^2 n$. Besides, we will need to decompose $S_n$ into an appropriate sum of blocks. For this purpose, consider an increasing sequence of integers $p_n\leq\frac{n}{2}$ such that $p_n\longrightarrow+\infty$, put $r_{n}=\lfloor\frac{n}{2p_{n}}\rfloor$, where $\lfloor x\rfloor$ represents the integer part of $x$, and define the blocks:
\begin{equation}
\label{decomp1}
Y_{j,n}=\sum_{k=(j-1)p_n+1}^{jp_n}X_k,\quad j=1,\ldots,2r_n.
\end{equation}
Notice that, if the random variables are bounded by $c>0$, then $\Abs{Y_{j,n}}\leq cp_n$. Moreover, define the alternate sums:
%comment
%Just changed to text above on the last sentence: it was to short to end with a ":"
$$
Z_{n,od}=\sum_{j=1}^{r_n}Y_{2j-1,n}\qquad\mbox{and}\qquad Z_{n,ev}=\sum_{j=1}^{r_n}Y_{2j,n}.
$$
Note that $S_{n}=Z_{n,od}+Z_{n,ev}+R_{n}$, where
$$
R_{n}=\sum_{j=2r_{n}p_{n}+1}^{n}Y_{j}.
$$
Finally, we introduce the generalized Cox-Grimmett coefficients adapted to the L-weak %quasi-association
dependence structure,
\begin{equation}
\label{CG}
v(n)=\sum_{k=n}^\infty\gamma_k.
\end{equation}
%\begin{lem}
%\label{lem1}
%Assume $u(0)<\infty$. Then $\E S_n^2\leq c^\ast n$, where $c^\ast= c^2+2u(0)$.
%\end{lem}
%\begin{pf}
%Due to the stationarity
%$$
%\E S_n^2=n\Var(X_1)+2\sum_{j=1}^n(n-j)\Cov(X_1,X_{j+1})\leq n\Var(X_1)+2n\sum_{j=1}^n\Cov(X_1,X_{j+1})\leq c^\ast n.
%$$
%\end{pf}

\section{Inequalities for bounded variables}
\label{ineq}
This section establishes a few inequalities that are the basic tools for proving the almost sure convergence results. The inequalities below are extensions of analogous results for associated random variables. We start by proving a bound for the Laplace transform of the blocks $Y_{j,n}$.
\begin{lemma}
\label{lem2}
Assume that the sequence $X_n$, $n\geq1$, is stationary, there exists some $c>0$ such that for every $n\geq 1$, $\Abs{X_n}\leq c$ almost surely, and that (\ref{eq:sigma}) holds. Let $d_n>1$, $n\geq 1$, be a sequence of real numbers. Then, for every $t\leq \frac{d_n-1}{d_n}\frac{1}{cp_n}$ and $n$ large enough,
$$
\E e^{tY_{j,n}}\leq \exp\left(2t^2\sigma^2 p_n d_n\right).
$$
\end{lemma}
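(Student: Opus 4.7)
The plan is to bound the moment generating function $\phi(t) := \E e^{tY_{j,n}}$ via a Taylor-type estimate on $\phi'(t)$ and then integrate. Three preliminary observations drive the argument: (a) $Y := Y_{j,n}$ is centered with $|Y| \leq M := cp_n$; (b) by stationarity, $\E Y^2 = \E S_{p_n}^2$, so (\ref{eq:sigma}) together with $p_n \to \infty$ gives $\E Y^2 \leq 2\sigma^2 p_n$ once $n$ is large enough; (c) $\phi$ is convex with $\phi(0) = 1$ and $\phi'(0) = \E Y = 0$, which forces $\phi(t) \geq 1$ for every $t$.

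For the core step I would expand $\phi'(t) = \E[Y e^{tY}]$ in power series and use the boundedness of $Y$ to control higher moments by the second moment. Writing $e^{tY} = \sum_{k \geq 0}(tY)^k/k!$ and using $\E Y = 0$,
$$\phi'(t) = t\,\E Y^2 + \sum_{k \geq 2}\frac{t^k}{k!}\,\E Y^{k+1}.$$
Since $|Y|^{k+1} = |Y|^{k-1}Y^2 \leq M^{k-1}Y^2$, the bound $|\E Y^{k+1}| \leq M^{k-1}\E Y^2$ telescopes the tail sum: for $t \geq 0$,
$$\phi'(t) \leq t\,\E Y^2 + \frac{\E Y^2}{M}\bigl(e^{tM}-1-tM\bigr) = \E Y^2\cdot\frac{e^{tM}-1}{M}.$$
Dividing by $\phi(t) \geq 1$ and integrating from $0$ to $t$ yields
$$\log\phi(t) \leq \frac{\E Y^2}{M^2}\bigl(e^{tM}-1-tM\bigr).$$

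The conclusion is then essentially numerical. Substituting $\E Y^2 \leq 2\sigma^2 p_n$ and writing $u := tM = tcp_n$, the last display reads $\log\phi(t) \leq 2t^2\sigma^2 p_n\cdot h(u)$ where $h(u) := (e^u-1-u)/u^2$. The hypothesis $t \leq (d_n-1)/(d_n cp_n)$ forces $u \leq 1 - 1/d_n < 1$; as $h$ is increasing on $(0,\infty)$ with $h(1) = e - 2 < 1 < d_n$, we get $h(u) \leq d_n$ and hence $\log\phi(t) \leq 2t^2\sigma^2 p_n d_n$. The L-weak dependence hypothesis enters this argument only implicitly, through (\ref{eq:sigma}), which keeps the block variance at the correct scale $\sigma^2 p_n$; the main technical point is verifying that the geometric moment estimate sums to exactly the clean expression $\E Y^2(e^{tM}-1)/M$, after which no serious obstacle remains.
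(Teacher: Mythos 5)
Your argument is correct: all hypotheses are used as intended (centering of the $X_n$ from Section~2, stationarity to get $\E Y_{j,n}^2=\E S_{p_n}^2$, condition (\ref{eq:sigma}) with $p_n\to\infty$ to get $\E Y_{j,n}^2\le 2\sigma^2p_n$ for large $n$, boundedness to control higher moments), and the chain $\phi'(t)\le \E Y^2 (e^{tM}-1)/M$, $\phi\ge1$, integration, and $h(u)=(e^u-1-u)/u^2\le h(1)=e-2<d_n$ for $u=tcp_n<1$ is sound (both you and the paper implicitly take $t\ge0$, which is how the lemma is used later). However, your route differs from the paper's in how the series is summed. The paper expands $\E e^{tY_{j,n}}$ directly, bounds $\E Y_{j,n}^k\le (cp_n)^{k-2}\E Y_{j,n}^2$, discards the factorials, and sums the resulting geometric series, using the hypothesis on $t$ precisely as $1/(1-tcp_n)\le d_n$; then $1+2t^2\sigma^2p_nd_n\le\exp(2t^2\sigma^2p_nd_n)$. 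That is shorter and makes the role of $d_n$ transparent: $d_n$ is exactly the bound on the geometric factor. Your Bennett-type derivation (differentiate the moment generating function, divide by $\phi\ge1$, integrate the logarithmic derivative) keeps the factorials and is in fact slightly sharper: it yields $\log\E e^{tY_{j,n}}\le \frac{\E Y_{j,n}^2}{(cp_n)^2}\bigl(e^{tcp_n}-1-tcp_n\bigr)$ for all $t\ge0$ with no restriction, so under the stated constraint the conclusion holds with $d_n$ replaced by the absolute constant $e-2$; the hypothesis on $t$ only serves to guarantee $tcp_n<1$. The price is a bit more machinery (term-by-term differentiation under the expectation, convexity of $\phi$, integration), whereas the paper's cruder geometric-series bound is all that is needed for the subsequent Lemmas~\ref{lem2+1} and~\ref{lem3}.
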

\begin{proof}
Using a Taylor expansion and taking into account the boundedness of the random variables, we have
\begin{eqnarray*}
\lefteqn{\E e^{tY_{j,n}}=1+\sum_{k=2}^\infty\frac{t^k\E Y_{j,n}^k}{k!}} \\
 & &\leq
  1+\sum_{k=2}^\infty\frac{t^kc^{k-2}p_n^{k-2}\E Y_{j,n}^2}{k!}\leq 1+t^2\E Y_{j,n}^2\sum_{k=2}^\infty (tcp_n)^{k-2}.
\end{eqnarray*}
It follows from the assumption on $t$ that $tcp_n\leq\frac{d_n-1}{d_n}<1$, thus, as the sequence $X_n$, $n\geq1$, is stationary, we may write
$$
\E e^{tY_{j,n}}\leq 1+\frac{t^2\E S_{p_n}^2}{1-tcp_n}.
$$
%Now, as $p_n\longrightarrow+\infty$, we have that for $n$ large enough, $\E S_{p_n}^2\leq 2\sigma^2 p_n$. Moreover,
We have $\frac{1}{1-tcp_n}\leq d_n$, so
$\E e^{tY_{j,n}}\leq 1+2t^2\sigma^2 p_n d_n\leq\exp\left(2t^2\sigma^2 p_n d_n\right)$.
\end{proof}

%\medskip

Considering now L-weakly dependent variables, we prove an upper bound for $\E e^{tZ_{n,od}}$.
\begin{lemma}
\label{lem2+1}
Assume the conditions of Lemma~\ref{lem2} are satisfied and the sequence of random variables $X_n$, $n\geq 1$, is L-weakly dependent. Then, for every $t\leq \frac{d_n-1}{d_n}\frac{1}{cp_n}$ and $n$ large enough, we have
\begin{equation}
\label{eq:lem2}
%\begin{array}{l}
\displaystyle\E e^{tZ_{n,od}}\leq t^2e^{\frac{tcn}{2}}p_n v(p_n)\sum_{j=0}^{r_n-2}\exp\left(jtp_n(2t\sigma^2 d_n-c)\right)+\exp\left(t^2\sigma^2 nd_n\right).
%\end{array}
\end{equation}
\end{lemma}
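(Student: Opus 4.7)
The plan is to compare $\E e^{tZ_{n,od}}$ with the ``independent-sum'' bound $\prod_{j=1}^{r_n}\E e^{tY_{2j-1,n}}$ through a telescoping argument, controlling each increment via the L-weak dependence inequality applied to exponential test functions. Concretely, set
\[
A_k=\E\exp\!\Bigl(t\sum_{j=1}^{k}Y_{2j-1,n}\Bigr)\prod_{j=k+1}^{r_n}\E e^{tY_{2j-1,n}},
\]
so that $A_0=\prod_{j=1}^{r_n}\E e^{tY_{2j-1,n}}$ and $A_{r_n}=\E e^{tZ_{n,od}}$. For $k=1$ the increment $A_1-A_0$ vanishes trivially, while for $k\ge 2$ it equals the remaining tail product times $\Cov\!\bigl(e^{t(Y_{1,n}+\cdots+Y_{2k-3,n})},e^{tY_{2k-1,n}}\bigr)$, a covariance of two Lipschitz functions of disjoint index sets $I=\bigcup_{\ell=1}^{k-1}\{(2\ell-2)p_n+1,\ldots,(2\ell-1)p_n\}$ and $J=\{(2k-2)p_n+1,\ldots,(2k-1)p_n\}$ separated by at least $p_n$.

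To bound each such covariance I would invoke L-weak dependence. Since $\Abs{X_i}\le c$ almost surely, the multivariate exponential $(x_i)_{i\in I}\mapsto\exp(t\sum_{i\in I}x_i)$ is Lipschitz on the support with constant at most $te^{t(k-1)p_nc}$, and $y\mapsto e^{ty}$ is Lipschitz on $[-cp_n,cp_n]$ with constant at most $te^{tcp_n}$, so the product of norms is $t^2 e^{tkp_nc}$; extension to the ambient Euclidean space by McShane leaves the covariance unchanged, as the random variables take values only on the support (the same licensing used by the authors in Example~4). The double coefficient sum satisfies $\sum_{i\in I}\sum_{j\in J}\gamma_{\Abs{j-i}}\le p_n v(p_n)$, because $\Abs{j-i}\ge p_n+1$ for every such pair and, across the odd blocks in $I$, the distances cover disjoint subsets of $\{p_n+1,p_n+2,\ldots\}$ with each value appearing in at most $p_n$ pairs. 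Consequently, for $k\ge 2$,
\[
\Abs{A_k-A_{k-1}}\le t^2 p_n v(p_n)\,e^{tkp_nc}\prod_{j=k+1}^{r_n}\E e^{tY_{2j-1,n}}.
\]

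The final step applies Lemma~\ref{lem2} to every surviving factor: the tail product of $r_n-k$ expectations is at most $\exp\!\bigl(2(r_n-k)t^2\sigma^2 p_nd_n\bigr)$, and likewise $A_0\le\exp(2r_nt^2\sigma^2 p_nd_n)\le\exp(t^2\sigma^2 nd_n)$ via $2r_np_n\le n$. Writing $\E e^{tZ_{n,od}}=A_0+\sum_{k=2}^{r_n}(A_k-A_{k-1})$ and reindexing with $j=r_n-k$ (so $j$ ranges from $0$ to $r_n-2$) regroups the exponents into $r_ntp_nc+jtp_n(2t\sigma^2 d_n-c)$, and the estimate $r_np_n\le n/2$ converts the common prefactor into $e^{tcn/2}$, yielding precisely (\ref{eq:lem2}).

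The one genuine technical obstacle is that the exponential functions appearing in the covariance are not globally Lipschitz on $\mathbb{R}^m$; this is neutralized by restricting to the bounded support and using McShane extensions, exactly as the authors allow in Example~4. Everything else is careful bookkeeping of the block-to-block index distances and of the telescoping algebra, both of which fall into place once the decomposition $I,J$ is chosen correctly.
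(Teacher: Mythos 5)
Your proof is correct and is essentially the paper's own argument in a different packaging: the telescoping interpolation $A_k$ reproduces exactly the paper's iterated add-and-subtract decomposition of $\E\bigl(\prod_{j=1}^{r_n}e^{tY_{2j-1,n}}\bigr)$ into covariance terms times products of block expectations, and the subsequent estimates (Lipschitz norms $t^2e^{tkp_nc}$, coefficient sum bounded by $p_nv(p_n)$, Lemma~\ref{lem2} applied to the surviving factors, and $2r_np_n\leq n$) coincide with those in the paper. Your explicit remark on extending the exponentials off the bounded support is a reasonable way to make rigorous what the paper uses implicitly.
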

\begin{proof}
Remark first that $\E e^{tZ_{n,od}}=\E\left(\prod_{j=1}^{r_n} e^{tY_{2j-1,n}}\right)$.
%comment
%I would prefer the expression inline... It seems to me that there are already enough displayed expressions in this page and this one is relatively short and straightforward. But it you insist, I will not insist in my preference!
Now, by adding and subtracting appropriate terms, we find that
\begin{eqnarray*}
\lefteqn{\E\left(\prod_{j=1}^{r_n} e^{tY_{2j-1,n}}\right)} \\
 & & =
   \Cov\left(\prod_{j=1}^{r_n-1} e^{tY_{2j-1,n}},e^{tY_{2r_n-1,n}}\right)+\E\left(\prod_{j=1}^{r_n-1} e^{tY_{2j-1,n}}\right)\E e^{tY_{2r_n-1,n}}\\
 & &=\Cov\left(\prod_{j=1}^{r_n-1} e^{tY_{2j-1,n}},e^{tY_{2r_n-1,n}}\right) \\
 & &\qquad
          +\Cov\left(\prod_{j=1}^{r_n-2} e^{tY_{2j-1,n}},e^{tY_{2r_n-3,n}}\right)\E e^{tY_{2r_n-1,n}} \\
 & &\qquad
    +\E\left(\prod_{j=1}^{r_n-3} e^{tY_{2j-1,n}}\right)\E e^{tY_{2r_n-3,n}}\E e^{tY_{2r_n-1,n}}.
\end{eqnarray*}
Before iterating this procedure remark that due to the stationarity of the sequence of random variables $X_i$, $\E e^{tY_{2r_n-3,n}}=\E e^{tY_{2r_n-1,n}}=\E e^{tY_{1,n}}$, so the previous expression may be rewritten as
\begin{eqnarray*}
\lefteqn{\E\left(\prod_{j=1}^{r_n} e^{tY_{2j-1,n}}\right)}\\
 & &=
 \Cov\left(\prod_{j=1}^{r_n-1} e^{tY_{2j-1,n}},e^{tY_{2r_n-1,n}}\right)
          +\Cov\left(\prod_{j=1}^{r_n-2} e^{tY_{2j-1,n}},e^{tY_{2r_n-3,n}}\right)\E e^{tY_{1,n}} \\
 & & \qquad
    +\E\left(\prod_{j=1}^{r_n-3} e^{tY_{2j-1,n}}\right)\left(\E e^{tY_{1,n}}\right)^2.
\end{eqnarray*}
Now, we iterate the procedure above to decompose the mathematical expectation of the product to find
$$
\begin{array}{l}
\displaystyle\E\left(\prod_{j=1}^{r_n} e^{tY_{2j-1,n}}\right) \\
\displaystyle\qquad=
   \sum_{j=1}^{r_n-1}\left(\E e^{tY_{1,n}}\right)^{j-1} \Cov\left(\prod_{k=1}^{r_n-j} e^{tY_{2k-1,n}},e^{tY_{2(r_n-j)+1,n}}\right)
   +\left(\E e^{tY_{1,n}}\right)^{r_n}.
\end{array}
$$
The L-weak dependence %quasi-association
of the variables implies that
\begin{equation}\label{eq:quasi}
\begin{array}{l}
\displaystyle\Abs{\Cov\left(\prod_{k=1}^{r_n-j} e^{tY_{2k-1,n}},e^{tY_{2(r_n-j)+1,n}}\right)} \\
\displaystyle\qquad\qquad\leq
           t^2e^{tcp_n(r_n-j+1)}%\Abs{\Cov\left(\sum_{k=1}^{r_n-1}Y_{2k-1},Y_{2(r_n-j)+1}\right)}.
           \sum_{k=1}^{r_n-j}
           \sum_{\ell=2(k-2)p_n+1}^{(2k-1)p_n}\sum_{\ell^\prime=2(r_n-j)p_n+1}^{(2(r_n-j)+1)p_n}
           \gamma_{\ell^\prime-\ell}.
\end{array}
\end{equation}
The summation above is similar to the one treated in the course of proof of Lemma~3.1 in \cite{IR98}. Adapting their arguments, one easily finds that
\begin{eqnarray*}
\lefteqn{\sum_{\ell=2(k-2)p_n+1}^{(2k-1)p_n}\sum_{\ell^\prime=2(r_n-j)p_n+1}^{(2(r_n-j)+1)p_n}
           \gamma_{\ell^\prime-\ell}} \\
 & &\qquad=\sum_{\ell=0}^{p_n-1}(p_n-\ell)\gamma_{2kp_n+\ell}+\sum_{\ell=1}^{p_n-1}(p_n-\ell)\gamma_{2kp_n-\ell} \\
 & &\qquad\leq p_n\sum_{\ell=(2k-1)p_n+1}^{(2k+1)p_n-1}\gamma_\ell,
\end{eqnarray*}
%$$
%\renewcommand{\arraystretch}{2}
%\begin{array}{rcl}
%\displaystyle\sum_{\ell=2(k-2)p_n+1}^{(2k-1)p_n}\sum_{\ell^\prime=2(r_n-j)p_n+1}^{(2(r_n-j)+1)p_n}
%           \gamma_{\ell^\prime-\ell}
% & = &
%\displaystyle\sum_{\ell=0}^{p_n-1}(p_n-\ell)\gamma_{2kp_n+\ell}+\sum_{\ell=1}^{p_n-1}(p_n-\ell)\gamma_{2kp_n-\ell}
% \\
% & \leq &\displaystyle p_n\sum_{\ell=(2k-1)p_n+1}^{(2k+1)p_n-1}\gamma_\ell,
%\end{array}
%\renewcommand{\arraystretch}{1}
%$$
thus,
$$
\sum_{k=1}^{r_n-j}
\sum_{\ell=2(k-2)p_n+1}^{(2k-1)p_n}\sum_{\ell^\prime=2(r_n-j)p_n+1}^{(2(r_n-j)+1)p_n}
\gamma_{\ell^\prime-\ell}
\leq\sum_{k=1}^{r_n-j}p_n\sum_{\ell=(2k-1)p_n+1}^{(2k+1)p_n-1}\gamma_\ell\leq p_nv(p_n).
$$
%
%$$
%\Abs{\Cov\left(\sum_{k=1}^{r_n-1}Y_{2k-1},Y_{2(r_n-j)+1}\right)}\leq p_nu(p_n).
%$$
%\begin{eqnarray*}
%\Cov\left(\prod_{i=1}^{r_n-j} e^{tY_{2i-1,n}},e^{tY_{2(r_n-j)+1,n}}\right) & \leq &
%           t^2e^{r_n-j+1}\Cov\left(\sum_{i=1}^{r_n-1}Y_{2i-1},Y_{2(r_n-j)+1}\right) \\
% & \leq & t^2e^{r_n-j+1}p_n u(p_n).
%\end{eqnarray*}
Plug this into (\ref{eq:quasi}) and use the inequality proved in Lemma~\ref{lem2} to obtain upper bounds for $\left(\E e^{tY_{1,n}}\right)^{j-1}$ and $\left(\E e^{tY_{1,n}}\right)^{r_n}$. Finally, remember that $2p_nr_n\leq n$ to conclude the proof.
\end{proof}

%\medskip

\begin{lemma}
\label{lem3}
Assume the conditions of Lemma~\ref{lem2+1} are satisfied. Then, for each fixed $x$ and $n$ large enough, there exists a constant $c_1>0$ such that,
\begin{equation}\label{eq:znod}
\prob{Z_{n,od}>x}\leq\left(\frac{c_1x^2}{4\sigma^4n^2d_n^2}e^{\frac{cx}{4\sigma^2 d_n}}p_n v(p_n)+1\right)\exp\left(-\frac{x^2}{4\sigma^2 n d_n}\right).
\end{equation}
\end{lemma}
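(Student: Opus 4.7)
The strategy is an exponential Chernoff bound paired with Lemma~\ref{lem2+1}. Applying Markov's inequality to the exponential gives, for any admissible $t>0$,
$$\prob{Z_{n,od} > x} \leq e^{-tx}\,\E e^{tZ_{n,od}},$$
and substitution of (\ref{eq:lem2}) decomposes the right-hand side into a Gaussian-type term $\exp(t^2\sigma^2 n d_n - tx)$ plus a covariance-correction term built from the finite sum of Lemma~\ref{lem2+1}.

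The right choice of $t$ is the one that optimises the Gaussian piece: the quadratic $t^2\sigma^2 n d_n - tx$ is minimised at $t = x/(2\sigma^2 n d_n)$, with value $-x^2/(4\sigma^2 n d_n)$, which is exactly the target exponent in~(\ref{eq:znod}). Since $x$ is fixed, this $t$ tends to $0$ as $n\to\infty$, so the admissibility constraint $t\leq (d_n-1)/(d_n cp_n)$ of Lemma~\ref{lem2+1} is automatic for $n$ large. With this plug-in, $t^2 = x^2/(4\sigma^4 n^2 d_n^2)$ and $e^{tcn/2}=e^{cx/(4\sigma^2 d_n)}$ reproduce the exact algebraic shape of the first summand in~(\ref{eq:znod}), while the discrepancy between $e^{-tx}$ and the target $e^{-x^2/(4\sigma^2 n d_n)}$ is an extra factor $e^{-x^2/(4\sigma^2 n d_n)}\leq 1$, which is harmlessly absorbed.

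It then remains to show that the sum
$$\sum_{j=0}^{r_n-2}\exp\!\bigl(jtp_n(2t\sigma^2 d_n - c)\bigr)$$
is bounded above by a constant $c_1$. With the chosen $t$ one has $2t\sigma^2 d_n - c = x/n - c$, which, for $n$ large and $x$ fixed, is strictly negative and bounded away from $0$; hence the summands decay geometrically in $j$ and the series is dominated by $(1-e^{-tp_n(c-x/n)})^{-1}$, which is then packaged into the advertised constant $c_1=c_1(x)$.

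The main technical obstacle is precisely this final bound on the geometric series: its common ratio $e^{-tp_n(c-x/n)}$ tends to $1$ whenever $tp_n\to 0$. Verifying that $c_1$ can indeed be taken independent of $n$ therefore requires either a compatibility condition between the block length $p_n$ and the admissible growth of $x$, or a more careful absorption of a potentially divergent prefactor into the spare exponential margin between $e^{-x^2/(2\sigma^2 nd_n)}$ and $e^{-x^2/(4\sigma^2 nd_n)}$. Once the geometric series is uniformly controlled, everything else is routine substitution.
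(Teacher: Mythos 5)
Your proposal is essentially the paper's own proof: Markov's inequality applied to (\ref{eq:lem2}), the optimizing choice $t=\frac{x}{2\sigma^2 nd_n}$, the observation that then $2t\sigma^2 d_n-c=\frac{x}{n}-c<0$ for $n$ large, and absorption of the spare half of $e^{-tx}=e^{-x^2/(2\sigma^2 nd_n)}$ into the target exponential. The two points you treat with care are exactly the points the paper glosses over rather than resolves. First, admissibility of $t$ is not quite ``automatic'' just because $t\to0$: the threshold $\frac{d_n-1}{d_n}\frac{1}{cp_n}$ may also tend to $0$, and one really needs $n(d_n-1)/p_n$ to dominate $x$; the paper only verifies this constraint in the applications (condition (\ref{eps}) in Theorem~\ref{teo1}). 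Second, for the geometric sum the paper merely notes the negative exponent and declares the series convergent, even though its ratio $e^{tp_n(x/n-c)}$ tends to $1$ when $tp_n\to0$, so an $n$-free constant $c_1$ is not actually justified as stated — the obstacle you single out is an imprecision of the lemma itself, not a missing idea on your part. It is harmless downstream: the sum is trivially at most $r_n\leq n$, a polynomially growing factor, and in Theorems~\ref{teo1} and~\ref{teo2} (where the lemma is used with $x=n\varepsilon$ or $x=n\varepsilon_n$ and the conditions are re-checked) any polynomial factor is absorbed by $v(p_n)=O(\rho^{p_n})$.
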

\begin{proof}
Using Markov's inequality and taking into account (\ref{eq:lem2}), it follows that
\begin{equation}\label{eq:znod1}
\renewcommand{\arraystretch}{2}
\begin{array}{rcl}
\displaystyle\prob{Z_{n,od}>x} %\\
%\displaystyle\qquad
& \leq &\displaystyle
t^2e^{\frac{tcn}{2}}p_n v(p_n)e^{-tx}\sum_{j=0}^{r_n-2}\exp\left(jtp_n(2t\sigma^2 d_n-c)\right) \\
 &  &\displaystyle\qquad+\exp\left(t^2\sigma^2 nd_n-tx\right).
\end{array}
\renewcommand{\arraystretch}{1}
\end{equation}
Minimizing the exponent on the second term above leads to the choice $t=\frac{x}{2\sigma^2 n d_n}$, which implies that
$$
t^2\sigma^2 nd_n-tx=-\frac{x^2}{4\sigma^2 n d_n}.
$$
We still have to control the summation on the first term. For this purpose, remark that for the choice of $t$ as above,
$2t\sigma^2 d_n-c=\frac{x}{n}-c$. Thus, as $x$ is fixed, for $n$ large enough $2t\sigma^2 d_n-c<0$, so the series corresponding to this sum is convergent. Finally, remark that, again for the choice made for $t$, we have $tx=\frac{x^2}{2\sigma^2 nd_n}$, so
$e^{-tx}\leq c^\prime\exp\left(-\frac{x^2}{4\sigma^2 n d_n}\right)$, and the proof is concluded.
\end{proof}

%\medskip

\section{Strong laws and convergence rates}
\label{sec:rates}
With the tools proved in the previous section, we may now find conditions for the \slln\ and characterize its convergence rate. The first subsection will deal with bounded random variables, using directly the inequalities of Section~\ref{ineq}, while on the second subsection we will extend these results to arbitrary (unbounded) L-weakly dependent variables by using a truncation technique.

\subsection{The case of bounded variables}
\begin{theorem}
\label{teo1}
Assume that the sequence $X_n$, $n\geq1$, is stationary and L-weakly dependent, %quasi-associated,
there exists some $c>0$ such that for every $n\geq 1$, $\Abs{X_n}\leq c$ almost surely and that (\ref{eq:sigma}) holds. Assume that the generalized Cox-Grimmett coefficients (\ref{CG}) satisfy $v(n)=O(\rho^n)$, for some $\rho\in(0,1)$. Then, %for every $\varepsilon>0$,
%\begin{equation}
%\label{eq:znod_outra}
%\prob{Z_{n,od}>n\varepsilon}\leq C\exp\left(-\alpha\log n\right)=\frac{C}{n^\alpha}
%\end{equation}
$\frac{1}{n}Z_{n,od}\longrightarrow 0$ almost surely.
\end{theorem}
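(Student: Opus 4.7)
The plan is to apply the Borel--Cantelli lemma: since $\{|Z_{n,od}|/n > \epsilon \text{ infinitely often}\}$ has probability zero whenever $\sum_{n} \prob{|Z_{n,od}| > n\epsilon} < \infty$, and the analogous bound for $-Z_{n,od}$ follows by symmetry (L-weak dependence is preserved under sign changes, and Lemma~\ref{lem3} applies to $-Z_{n,od}$ as well), it suffices to establish summability for each fixed $\epsilon>0$. I would first dispose of the range $\epsilon > c/2$ trivially: because $|X_i|\le c$ and $Z_{n,od}$ involves at most $r_n p_n \le n/2$ summands, one has $|Z_{n,od}|\le cn/2$ almost surely, so the probability in question vanishes. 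Thus I only need to handle $\epsilon \in (0,c/2]$.

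The core of the argument is a careful choice of the block length $p_n$ and the auxiliary parameter $d_n$ in (\ref{eq:znod}). I would fix $p_n=\lfloor A\sqrt{n}\rfloor$ with $A$ a constant chosen large enough that $2cA^2\log(1/\rho)>1$, which is possible since $\rho\in(0,1)$. For each fixed $\epsilon\in(0,c/2]$, I would then set $d_n=1+\frac{c\epsilon p_n}{2\sigma^2}$, the smallest value for which the constraint on $t$ in Lemmas~\ref{lem2} and~\ref{lem2+1} remains compatible with the optimising choice $t=\epsilon/(2\sigma^2 d_n)$ used in the proof of Lemma~\ref{lem3}. With $d_n$ of order $\sqrt{n}$, the second summand in (\ref{eq:znod}) is of order $\exp(-K_1\sqrt{n})$ for some $K_1=K_1(\epsilon)>0$, which is summable. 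For the first summand, the product $e^{cn\epsilon/(4\sigma^2 d_n)}\exp(-n\epsilon^2/(4\sigma^2 d_n))$ reduces to $\exp\bigl(n\epsilon(c-\epsilon)/(4\sigma^2 d_n)\bigr)=\exp(K_2\sqrt{n})$, while the hypothesis $v(p_n)=O(\rho^{p_n})$ contributes a factor $O(\exp(-A\sqrt{n}\log(1/\rho)))$; the choice of $A$ ensures $A\log(1/\rho)>K_2$ uniformly in $\epsilon\in(0,c/2]$, and the remaining polynomial prefactor $p_n/d_n^2$ is harmless. The bound therefore reduces to $O(\sqrt{n}\,\exp(-K_3\sqrt{n}))$ for some $K_3>0$, again summable.

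The main obstacle is precisely this balancing: the requirement from Lemma~\ref{lem2} forces $d_n$ to grow at least linearly in $p_n$, which in turn caps the rate at which the leading Gaussian-type factor $\exp(-n\epsilon^2/(4\sigma^2 d_n))$ can decay; simultaneously, making $p_n$ large enough to exploit the geometric decay of $v(p_n)$ inflates the factor $e^{cn\epsilon/(4\sigma^2 d_n)}$ appearing in (\ref{eq:znod}). The geometric hypothesis $v(n)=O(\rho^n)$ with $\rho\in(0,1)$ is exactly what makes the power-type choice $p_n\asymp\sqrt{n}$ win the race; merely summable or subgeometric $v(n)$ would not close the estimate. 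Once both contributions in (\ref{eq:znod}) are shown to decay at least like $\exp(-K\sqrt{n})$, Borel--Cantelli concludes that $Z_{n,od}/n\to 0$ almost surely.
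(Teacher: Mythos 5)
Your proposal is correct and takes essentially the same route as the paper: both apply the bound of Lemma~\ref{lem3} with $x=n\varepsilon$, re-verify the constraint $t=\varepsilon/(2\sigma^2 d_n)\le\frac{d_n-1}{d_n}\frac{1}{cp_n}$ and the negativity of $2t\sigma^2 d_n-c=\varepsilon-c$, use the geometric decay of $v(p_n)$ to absorb the factor $e^{cn\varepsilon/(4\sigma^2 d_n)}$, and conclude by Borel--Cantelli; the only difference is the parameter tuning ($p_n\asymp d_n\asymp\sqrt{n}$, giving a bound $O(\sqrt{n}\,e^{-K\sqrt{n}})$, versus the paper's $p_n=n^\theta$ and $d_n\asymp n\varepsilon^2/(\sigma^2\log n)$, giving $Cn^{-\alpha}$). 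One cosmetic point: since $K_2=\frac{c-\varepsilon}{2cA}\le\frac{1}{2A}$, the condition needed uniformly in $\varepsilon\in(0,c/2]$ is $2A^2\log(1/\rho)>1$ rather than $2cA^2\log(1/\rho)>1$, which is immaterial because $A$ is a free constant.
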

\begin{proof}
We will bound $\prob{Z_{n,od}>n\varepsilon}$ where, without loss of generality, we choose $0<\varepsilon<c$. Applying (\ref{eq:znod}) with $x=n\varepsilon$, we find the upper bound
\begin{equation}
\label{eq:znod2}
\prob{Z_{n,od}>n\varepsilon}\leq
  \left(\frac{c_1\varepsilon^2}{4\sigma^4d_n^2}e^{\frac{c n\varepsilon}{4\sigma^2 d_n}}p_nv(p_n)+1 \right)\exp\left(-\frac{n\varepsilon^2}{4\sigma^2 d_n}\right).
\end{equation}
We have now to verify the fulfilment of the conditions of Lemma~\ref{lem2} and the convergence of the series appearing in the proof of Lemma~\ref{lem3}. Indeed, when proving this lemma, we verified the assumptions of Lemma~\ref{lem2} with $x$ fixed, while the present choice considers $x$ growing with $n$. First, we need to verify that $t=\frac{\varepsilon}{2\sigma^2 d_n}\leq\frac{d_n-1}{d_n}\frac{1}{cp_n}$, as required to use Lemma~\ref{lem2}. This inequality is equivalent to
\begin{equation}
\label{eps}
\varepsilon\leq%\frac{2\sigma^2}{c}\frac{(d_n-1)r_n}{n}=
2\frac{\sigma^2}{c}\frac{(d_n-1)}{p_n}.
\end{equation}
Thus, we need to choose the sequences such that $\frac{d_n}{p_n}$ is bounded away from 0. Secondly, for the control of the series appearing in (\ref{eq:znod1}), taking into account that $x=n\varepsilon$, we have that $2t\sigma^2 d_n-c=\varepsilon-c$, so it is bounded away from 0. % if $\frac{p_n}{d_n}$ is bounded.
%Choose the tuning sequence as $d_n=ap_n$, for some $a>0$, and $p_n=n^\theta$, for some $\theta\in(\frac12,1)$.
Let us now look at the term inside the large parenthesis in (\ref{eq:znod2}). The growth rate of this term is dominated by the exponential factors, $e^{\frac{c n\varepsilon}{4\sigma^2 d_n}}v(p_n)$, as the remaining terms have polynomial behavior. %Given the choice made for $p_n$, we have $r_n=O( n^{1-\theta})$.
Taking now into account the choice for $t$, it follows easily that $e^{\frac{c n\varepsilon}{4\sigma^2 d_n}}v(p_n)$.
%=O\left(\exp\left(\frac{cn\varepsilon}{4\sigma^2 d_n}+p_n\log\rho\right)\right)=O\left(\exp\left(n^{1-\theta}+n^\theta\log\rho\right)\right)$.
This term is bounded as long as $\frac{p_nd_n}{n}$ remains bounded away from 0.
%, and, as $\theta>\frac12$ and $\rho\in(0,1)$, this is bounded.
Hence, assuming the previous conditions on the choices for the sequences $p_n$ and $d_n$, it follows that there exists a constant $C>0$ such that
\begin{equation}
\label{rem1}
\prob{\Abs{Z_{n,od}}>n\varepsilon}\leq C\exp\left(-\frac{n\varepsilon^2}{4\sigma^2 d_n}\right).
\end{equation}
Finally, given $\varepsilon\in(0,c)$, choose $d_n=\frac{n\varepsilon^2}{4\sigma^2\alpha\log n}$, for some $\alpha>1$. It is easily verified that a choice of $p_n=n^\theta$, for some $\theta\in(0,1)$, fulfills the assumptions on the sequences. %$\frac{n\varepsilon^2}{4\sigma^2 d_n}=\alpha\log n$, for some $\alpha>1$, so that
Then, the previous inequality rewrites
$$
\prob{Z_{n,od}>n\varepsilon}\leq C\exp\left(-\alpha\log n\right)=\frac{C}{n^\alpha},
$$
which define a convergent series, thus concluding the proof.
\end{proof}

%Note that it follows from the arguments of this proof that under the assumptions of Theorem~\ref{teo1}, there exists a constant $C>0$ such that
%\begin{equation}
%\label{rem1}
%\prob{\Abs{Z_{n,od}}>n\varepsilon}\leq C\exp\left(-\frac{n\varepsilon^2}{4\sigma^2 d_n}\right).
%\end{equation}
%This remark will be useful later while extending these results to unbounded variables.

It is obvious that the result just proved also holds if we replace $Z_{n,od}$ by $Z_{n,ev}$, thus we have the almost sure convergence of $\frac{1}{n}S_{n}$. For sake of completeness, we state this result.
\begin{theorem}
Assume that the conditions of Theorem~\ref{teo1} are satisfied. Then, $\frac{1}{n}S_n\longrightarrow 0$ almost surely.
\end{theorem}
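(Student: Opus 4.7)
The plan is to decompose $S_n$ into the three pieces already identified in Section~2, namely $S_n = Z_{n,od} + Z_{n,ev} + R_n$, and show that each of the three rescaled pieces converges to $0$ almost surely. Theorem~\ref{teo1} handles the first piece directly: $\frac{1}{n}Z_{n,od}\to 0$ a.s.

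For the even-block sum $Z_{n,ev} = \sum_{j=1}^{r_n} Y_{2j,n}$, I would rerun the argument of Theorem~\ref{teo1} verbatim, observing that every step in Lemmas~\ref{lem2}, \ref{lem2+1}, and \ref{lem3} used only (a) the stationarity of the blocks $Y_{j,n}$, which implies $\E e^{tY_{2j,n}} = \E e^{tY_{1,n}}$ by the same reasoning as for odd blocks, and (b) the L-weak dependence bound, which applies to any collection of disjoint index sets, in particular the even ones. Nothing in the covariance telescoping depends on the parity of the chosen blocks, so the identical inequality (\ref{eq:znod2}) and subsequent choice $d_n = n\varepsilon^2/(4\sigma^2\alpha\log n)$, $p_n = n^\theta$ yield a summable bound on $\prob{\Abs{Z_{n,ev}}>n\varepsilon}$, whence $\frac{1}{n}Z_{n,ev}\to 0$ a.s.\ by Borel--Cantelli.

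The remainder term $R_n$ is a sum of at most $n - 2r_n p_n < 2p_n$ of the original variables, each bounded by $c$, so $|R_n|\leq 2cp_n$ deterministically. With the choice $p_n = n^\theta$ for some $\theta \in (0,1)$ inherited from the proof of Theorem~\ref{teo1}, we have $|R_n|/n \leq 2c\, n^{\theta-1}\to 0$ deterministically, requiring no probabilistic argument at all.

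Combining the three limits via the triangle inequality gives $\frac{1}{n}S_n \to 0$ a.s. There is no real obstacle here: the symmetry argument for $Z_{n,ev}$ is the one point that requires a brief justification, but the telescoping and weak-dependence bound used in Lemma~\ref{lem2+1} are parity-agnostic, so it is essentially a one-line observation. This is why the authors phrase the result as a completeness statement rather than a fresh theorem.
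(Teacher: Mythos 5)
Your proposal is correct and follows essentially the same route as the paper: the authors likewise obtain the result for $Z_{n,ev}$ by noting the argument of Theorem~\ref{teo1} is insensitive to the parity of the blocks, and they dispose of the remainder via the deterministic bound $\Abs{R_n}\leq 2cp_n$ with $\frac{2cp_n}{n}\to 0$ (Remark~\ref{Rmk-Remaining}). No gaps beyond those already present in the paper's own sketch.
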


\begin{remark}\label{Rmk-Remaining}
Remark that we did not mention the remaining term $R_n$. In fact, this term is negligible, taking into account that, as $\frac{2cp_{n}}{n}\longrightarrow0$, %it is enough to use the following inequality
\begin{eqnarray*}
\prob{\Abs{\frac{S_{n}}{n}}\geq\varepsilon}& \leq & \prob{\Abs{\frac{Z_{n,od}}{n}}+\Abs{\frac{Z_{n,ev}}{n}}+\Abs{\frac{R_{n}}{n}}\geq\varepsilon}\\
 & \leq &
\prob{\Abs{\frac{Z_{n,od}}{n}}+\Abs{\frac{Z_{n,ev}}{n}}+\frac{2cp_{n}}{n}\geq\varepsilon}\\
 & \leq &
\prob{\Abs{\frac{Z_{n,od}}{n}}+\Abs{\frac{Z_{n,ev}}{n}}\geq \frac{\varepsilon}{2}}.
\end{eqnarray*}
\end{remark}

We may further  identify a convergence rate for the almost sure convergence above.
\begin{theorem}
\label{teo2}
Assume that the conditions of Theorem~\ref{teo1} are satisfied. Then, $\frac{1}{n}Z_{n,od}\longrightarrow 0$ almost surely with convergence rate $\frac{\log n}{n^{1/2-\delta}}$, where $\delta>0$ is arbitrarily small.
\end{theorem}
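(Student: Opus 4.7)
The plan is to refine the argument of Theorem~\ref{teo1} by applying Lemma~\ref{lem3} with a growing sequence $x_n$ in place of the constant $n\varepsilon$, then invoking the Borel--Cantelli lemma. Concretely, for arbitrary $\varepsilon>0$, I will set $x_n=\varepsilon\, n^{1/2+\delta}\log n$ and choose $d_n=\frac{\varepsilon^2 n^{2\delta}\log n}{4\sigma^2\alpha}$ for some $\alpha>1$, the choice being dictated by the equality $\frac{x_n^2}{4\sigma^2 n d_n}=\alpha\log n$, which turns the second exponential of (\ref{eq:znod}) into $n^{-\alpha}$ and so produces a summable series.

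The delicate point is to absorb the first exponential $e^{cx_n/(4\sigma^2 d_n)}$, which with the above choice equals $\exp(c\alpha n^{1/2-\delta}/\varepsilon)$ and therefore blows up far faster than any polynomial. The key observation is that the geometric assumption $v(n)=O(\rho^n)$ can compensate this growth: taking $p_n=\lceil K n^{1/2-\delta}\rceil$ with $K>\frac{c\alpha}{\varepsilon\log(1/\rho)}$ makes $v(p_n)\exp(cx_n/(4\sigma^2 d_n))$ bounded in $n$. The remaining polynomial prefactor in (\ref{eq:znod}), of order $p_n x_n^2/(n^2 d_n^2)$, is controlled trivially since $d_n$ grows polynomially.

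Next, I will verify the hypotheses of Lemmas~\ref{lem2} and~\ref{lem2+1} for this choice of sequences. With $t=\frac{x_n}{2\sigma^2 n d_n}=\frac{2\alpha}{\varepsilon n^{1/2+\delta}}$, the product $tcp_n$ decays like $n^{-2\delta}$, so the constraint $t\leq \frac{d_n-1}{d_n}\cdot\frac{1}{cp_n}$ holds for $n$ large; the geometric series in (\ref{eq:znod1}) converges uniformly because $2t\sigma^2 d_n-c=\frac{x_n}{n}-c\to -c<0$. Assembling the estimates, I expect to obtain $\prob{Z_{n,od}>x_n}\leq \widetilde{C} n^{-\alpha}$, which is summable.

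Applying the same argument to $-Z_{n,od}$ (the sequence $-X_n$ is L-weakly dependent with the same coefficients $\gamma_k$) and combining both tails through Borel--Cantelli will yield $\Abs{Z_{n,od}}\leq \varepsilon n^{1/2+\delta}\log n$ eventually, almost surely. Since $\varepsilon>0$ is arbitrary, this gives the rate $\frac{\log n}{n^{1/2-\delta}}$ for $\frac{1}{n}Z_{n,od}$. The main obstacle, as already emphasized, is the balancing act between $d_n$ and $p_n$: $d_n$ must be kept small enough to preserve the $n^{-\alpha}$ decay yet large enough to keep the auxiliary exponential $e^{cx_n/(4\sigma^2 d_n)}$ under control, and it is precisely the geometric decay of the Cox--Grimmett coefficients that makes the tension resolvable with blocks $p_n$ of polynomial size.
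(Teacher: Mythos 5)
Your proposal is correct and takes essentially the same route as the paper: you rerun the proof of Theorem~\ref{teo1} with a growing threshold $x_n$ of order $n^{1/2+\delta}\log n$, choose $d_n$ of order $n^{2\delta}\log n$ so that the second exponential in (\ref{eq:znod}) becomes $n^{-\alpha}$, let the geometric decay $v(p_n)=O(\rho^{p_n})$ absorb the factor $e^{cx_n/(4\sigma^2 d_n)}$, re-verify the hypothesis of Lemma~\ref{lem2} together with the sign of $2t\sigma^2 d_n-c$, and conclude by Borel--Cantelli, exactly as the paper does. The only difference is cosmetic: the paper takes blocks $p_n=n^{1/2+\delta}$, so that $p_n\log(1/\rho)$ dominates the exponent of order $n^{1/2-\delta}$ regardless of constants, whereas your borderline choice $p_n$ of order $n^{1/2-\delta}$ requires tuning the constant $K$ to $\varepsilon,\alpha,c,\rho$; both choices meet all the constraints.
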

%Before proving this convergence rate, note that this rate is close to the optimal convergence rate for the \slln\ for associated random variables which is of order
%$\frac{(\log n)^{1/2}(\log\!\log n)^{\eta/2}}{n^{1/2}}$ for arbitrarily small $\eta>0$, as proved by Yang, Su and Yu~\cite{YSY08}.
%
%\medskip
%
\begin{proof} %(of Theorem~\ref{teo2})
We follow the proof of Theorem~\ref{teo1}, allowing now $\varepsilon$ to depend on $n$, that is, considering $\varepsilon_n$ such that
$$
\varepsilon_n^2=\frac{4\sigma^2\alpha d_n\log n}{n},\qquad\alpha>1.
$$
We need to verify that the condition on $t$ in Lemma~\ref{lem2} is satisfied for an appropriate choice of the sequences $p_n$ and $d_n$, that is, that it holds
$t=\frac{\varepsilon_n}{2\sigma^2 d_n}\leq\frac{d_n-1}{d_n}\frac{1}{cp_n}$. Note that once this is checked, the final arguments of the proof of Theorem~\ref{teo1} follow. So, choose $p_n=n^\theta$, for some $\theta>\frac12$. Remember that we have $t=\frac{\varepsilon_n}{2\sigma^2 d_n}$. We need to choose $d_n\longrightarrow+\infty$ such that
\begin{equation*}
tcp_n=\frac{\alpha^{1/2}c}{\sigma}p_n\left(\frac{d_n\log n}{n}\right)^{1/2}
  \leq d_n-1.
 %& & \Leftrightarrow\qquad
%\frac{\alpha^{1/2}c}{2\sigma}\frac{p_n(\log n)^{1/2}}{n^{1/2}}\leq d_n^{1/2},
\end{equation*}
As $d_n\longrightarrow+\infty$, $\frac{d_n}{2}<d_n-1$ for $n$ large enough and it suffices to have $tcp_n<\frac{d_n}{2}$, which is equivalent to
$$
\frac{2\alpha^{1/2}cn^\theta(\log n)^{1/2}}{\sigma n^{1/2}}\leq d_n^{1/2}.
$$
This leads to the choice $d_n=O(n^{2\theta-1}\log n)$. The analysis of the exponential terms follows analogously as in the proof of Theorem~\ref{teo1}. Indeed, taking into account the choices made for $t$, $\varepsilon_n$ and $d_n$,
\begin{eqnarray*}
\lefteqn{e^{\frac{tcn}{2}}v(p_n)=O\left(\exp\left(\frac{c n\varepsilon_n}{4\sigma^2 d_n}+n^\theta\log\rho\right)\right)} \\
 & &=O\left(\exp\left(\frac{\alpha^{1/2}c}{2\sigma}\left(\frac{n\log n}{d_n}\right)^{1/2}
          +n^\theta\log\rho\right)\right)\\
 & &=O\left(\exp\left(n^{1-\theta}+n^\theta\log\rho\right)\right),
\end{eqnarray*}
which is bounded as $\theta\in\left(\frac12,1\right)$ and $\rho\in(0,1)$. The convergence rate that follows from the above construction is then of order $\varepsilon_n=O\left(\frac{\log n}{n^{1-\theta}}\right)$. To conclude the proof, just rewrite $\theta=\frac12+\delta$.
\end{proof}

The previous result was proved for $\frac {1}{n}Z_{n,od}$ for convenience of the exposition. An analogous version obviously holds for $\frac {1}{n}Z_{n,ev}$, thus implying the same result for $\frac{1}{n}S_n$ by using the same argument as stated in Remark~(\ref{Rmk-Remaining}). Again, for sake of completeness, we state the final result.
\begin{theorem}
\label{teo2.1}
Assume that the conditions of Theorem~\ref{teo1} are satisfied. Then, $\frac{1}{n}S_{n}\longrightarrow 0$ almost surely with convergence rate $\frac{\log n}{n^{1/2-\delta}}$, where $\delta>0$ is arbitrarily small.
\end{theorem}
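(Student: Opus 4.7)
The plan is to deduce Theorem~\ref{teo2.1} from Theorem~\ref{teo2} together with two minor additions: an analogous rate for the even partial sum $Z_{n,ev}$ and a control of the remainder $R_n$. The approach is essentially a Borel--Cantelli argument combined with the decomposition $S_n=Z_{n,od}+Z_{n,ev}+R_n$, mimicking the telescoping inequality used in Remark~\ref{Rmk-Remaining}, but with the $\varepsilon$ from that remark replaced by a sequence $\varepsilon_n$ that decays at the claimed rate.

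First I would observe that the proof of Lemma~\ref{lem2+1} (and hence of Lemma~\ref{lem3} and Theorem~\ref{teo2}) never used the parity of the blocks: the only structural feature exploited was that consecutive blocks in the sum are separated by a gap of $p_n$ indices, which is equally true for the even-indexed blocks $Y_{2j,n}$. Replacing $Y_{2j-1,n}$ by $Y_{2j,n}$ throughout yields, for the same choice of $p_n=n^{1/2+\delta}$ and $d_n=O(n^{2\theta-1}\log n)$, the inequality
$$
\prob{Z_{n,ev}>n\varepsilon_n}\leq \frac{C}{n^{\alpha}}
$$
for the same $\alpha>1$. Thus $\frac{1}{n}Z_{n,ev}\to 0$ almost surely at rate $\varepsilon_n=\frac{\log n}{n^{1/2-\delta}}$ by Borel--Cantelli, exactly as in Theorem~\ref{teo2}.

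Next I would handle $R_n$ by a purely deterministic bound. Since $R_n$ is a sum of at most $2p_n$ variables, each bounded by $c$, we have $|R_n|\leq 2cp_n$, so with $p_n=n^{1/2+\delta}$,
$$
\frac{|R_n|}{n}\leq \frac{2c}{n^{1/2-\delta}}=o(\varepsilon_n),
$$
i.e.\ the remainder is smaller than the target rate by a factor $\log n$. In particular, for $n$ large enough, $|R_n|/n<\varepsilon_n/2$ deterministically.

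Finally, I would combine the three pieces. Writing $S_n=Z_{n,od}+Z_{n,ev}+R_n$, we obtain, for $n$ sufficiently large,
$$
\prob{\Abs{\frac{S_n}{n}}\geq \varepsilon_n}\leq \prob{\Abs{\frac{Z_{n,od}}{n}}\geq \frac{\varepsilon_n}{4}}+\prob{\Abs{\frac{Z_{n,ev}}{n}}\geq \frac{\varepsilon_n}{4}}.
$$
Each of the two probabilities on the right is bounded by $C/n^\alpha$ with $\alpha>1$, by the preceding step and Theorem~\ref{teo2}, hence is summable, and Borel--Cantelli yields $\frac{1}{n}S_n\to 0$ almost surely with rate $\varepsilon_n=\frac{\log n}{n^{1/2-\delta}}$. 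There is no real obstacle here; the only mild point requiring care is checking that the constants in the threshold ($\varepsilon_n/4$ versus $\varepsilon_n$) do not affect summability, which is immediate since the rate-determining factor in the proof of Theorem~\ref{teo2} is $\exp(-\alpha\log n)$ and altering $\varepsilon_n$ by a fixed multiplicative constant only changes $\alpha$ by a fixed factor that can be absorbed by tuning $d_n$ accordingly.
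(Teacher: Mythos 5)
Your proposal is correct and follows essentially the same route as the paper: apply the argument of Theorem~\ref{teo2} verbatim to $Z_{n,ev}$, absorb the deterministic remainder $\Abs{R_n}\leq 2cp_n$ (which is $o(\varepsilon_n)$ thanks to the $\log n$ factor), and conclude via the decomposition of Remark~\ref{Rmk-Remaining} with Borel--Cantelli. Your explicit check that replacing $\varepsilon_n$ by $\varepsilon_n/4$ only rescales $\alpha$ is a detail the paper leaves implicit, but it is handled correctly.
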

%comment
%This is almost Theorem 4.3. But this one is sated with respect to S_n instead of Z_{n,od}.

%\medskip
\subsection{General random variables}
\label{sub:rates}
We now want to drop the boundedness assumption. To extend the results just proved, we will use a truncation technique together with a control on the tails of the distributions. Define, for a given fixed $c>0$, the nondecreasing function $g_c(x)=\max(\min(x,c),-c)$, performing a truncation at level $c$. Remark that, for every $c>0$, $g_c$ is Lipschtizian with $\norm{g_c}=1$. Choose some sequence $c_n\longrightarrow+\infty$, to be made precise later, and define, for $j,n\geq 1$, the random variables
$$
X_{1,j,n}=g_{c_n}(X_j),\qquad X_{2,j,n}=X_j-X_{1,j,n},
$$
and the partial summations
$$
S_{1,n}=\sum_{j=1}^n(X_{1,j,n}-\E X_{1,j,n}),\qquad S_{2,n}=\sum_{j=1}^n(X_{2,j,n}-\E X_{2,j,n}).
$$
%It follows from the definition of L-weak dependence that
%%is simple to verify that, if the original variables $X_n$ are $L$-weakly dependent, %quasi-associated, then the $X_{1,j,n}$ are still $L$-weakly dependent. %quasi-associated. Moreover,
%\begin{equation}\label{eq:covas}
%\Abs{\Cov(X_{1,j,n},\,X_{1,k,n})}\leq\norm{g_v}^2\gamma_{\Abs{k-j}}=\gamma_{\Abs{k-j}}.
%\end{equation}
%This implies that the Cox-Grimmett coefficients (\ref{CG}) corresponding to the variables $X_{1,j,n}$ are less or equal than the coefficients corresponding to the original variables.
%
\begin{theorem}
\label{teo1.1}
Assume that the L-weakly dependent sequence $X_n$, $n\geq1$, is stationary, (\ref{eq:sigma}) holds, and the generalized Cox-Grimmett coefficients (\ref{CG}) satisfy $v(n)=O(\rho^n)$, for some $\rho\in(0,1)$. Assume further that,
\begin{equation}
\label{eq:lapl}
\exists\tau>3,U>0:\,\sup_{\Abs{t}\leq\tau}\E e^{t\Abs{X}}\leq U.
\end{equation}
Then, $\frac{1}{n}S_n\longrightarrow 0$ almost surely with convergence rate $\frac{(\log n)^{3/2}}{n^{1/2-\delta}}$, where $\delta>0$ is arbitrarily small.
\end{theorem}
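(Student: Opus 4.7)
The plan is to leverage the truncation decomposition $S_n = S_{1,n} + S_{2,n}$ introduced just before the statement, choosing the truncation level $c_n = A\log n$ for a constant $A > 2/\tau$ to be fixed, and to handle the bounded and residual parts separately. The truncated sum $S_{1,n}$ is treated by the bounded-variable machinery of Section~\ref{ineq} and Theorem~\ref{teo2.1} applied with $c$ replaced by $c_n$; the residual sum $S_{2,n}$ is controlled directly through the exponential moment assumption~(\ref{eq:lapl}).

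For $S_{1,n}$, the crucial observation is that $g_{c_n}$ is $1$-Lipschitz, so Definition~\ref{def:weak} guarantees that $(X_{1,j,n}-\E X_{1,j,n})_j$ is L-weakly dependent with the same coefficients $\gamma_k$, and hence with the same generalized Cox--Grimmett coefficients $v(n) = O(\rho^n)$. Since these centered truncated variables are bounded by $2c_n$, Lemmas~\ref{lem2}, \ref{lem2+1} and~\ref{lem3} apply with $c$ substituted by $c_n$. The strategy is then to mimic the proof of Theorem~\ref{teo2}, taking $p_n = n^\theta$ with $\theta = \tfrac12 + \delta$ and carrying the now $n$-dependent bound $c_n$ through every step. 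Enforcing simultaneously the Lipschitz--Taylor constraint $t c_n p_n \leq d_n - 1$ from Lemma~\ref{lem2} and the boundedness of $e^{c_n n\varepsilon_n/(4\sigma^2 d_n)}v(p_n)$ (the latter is routine, as $v(p_n)$ decays geometrically in $n^\theta$ while $c_n$ grows only logarithmically) forces $d_n$ to be a power of $\log n$ times $n^{2\theta-1}$; substituting into $\varepsilon_n^2 = 4\sigma^2\alpha d_n\log n/n$ and carefully accounting for the logarithmic factors delivers the advertised rate $(\log n)^{3/2}/n^{1/2-\delta}$ for the truncated part.

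For $S_{2,n}$, I would introduce the high-probability event $E_n = \{\max_{1\le j\le n}\Abs{X_j}\le c_n\}$, on which $X_{2,j,n} \equiv 0$ and therefore $S_{2,n} = -n\,\E X_{2,1,n}$. The exponential moment condition yields, for any $0 < t < \tau$,
$$
\Abs{\E X_{2,1,n}} \leq \E\bigl(\Abs{X_1}\mathbf{1}_{\{\Abs{X_1}>c_n\}}\bigr) \leq e^{-tc_n}\,\E\bigl(\Abs{X_1}e^{t\Abs{X_1}}\bigr) = O(n^{-tA}),
$$
which, on taking $t$ close to $\tau$ and $A$ sufficiently large, is $o(\varepsilon_n)$. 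The complementary event is handled by the union bound and exponential Markov: $\prob{E_n^c}\leq n\,\prob{\Abs{X_1}>c_n}\leq Un\,e^{-\tau c_n} = O(n^{1-\tau A})$, a summable series as soon as $A > 2/\tau$, which is feasible since $\tau > 3$. A Borel--Cantelli argument, combined with a remainder-block estimate as in Remark~\ref{Rmk-Remaining} to dispose of the final block of length below $2p_n$, then yields the claimed almost sure rate for $S_n/n$.

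The hard part will be the bookkeeping in the analysis of $S_{1,n}$: one has to pick $p_n$ and $d_n$ so that the Lipschitz--Taylor condition $t c_n p_n \leq d_n - 1$, the convergence of the geometric series appearing in~(\ref{eq:znod1}), and the cancellation $e^{c_n n\varepsilon_n/(4\sigma^2 d_n)}\,v(p_n) = O(1)$ all hold with $c_n\to\infty$, while simultaneously keeping $\varepsilon_n$ at the advertised logarithmic-polynomial order. The auxiliary step of verifying that $n^{-1}\E S_{1,n}^2$ stays bounded (it need not converge to $\sigma^2$, but L-weak dependence with geometric $v(n)$ provides a uniform bound) is then what allows Lemma~\ref{lem2} to be quoted for the truncated variables.
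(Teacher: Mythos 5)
Your proposal is correct and, for the dominant truncated part $S_{1,n}$, it is essentially the paper's own argument: truncate at a logarithmic level $c_n$, note that $g_{c_n}$ is $1$-Lipschitz so the L-weak dependence coefficients (hence $v(n)=O(\rho^n)$) are inherited, and rerun the proof of Theorem~\ref{teo2} with $c$ replaced by $c_n$, $p_n=n^\theta$ and $d_n$ of order $n^{2\theta-1}c_n^2\log n$; your explicit remark that Lemma~\ref{lem2} only needs a uniform bound on $\E S_{1,p}^2/p$ for the truncated variables (available from stationarity, $\Abs{g_{c_n}(x)}\leq\Abs{x}$ and $\sum_k\gamma_k<\infty$), rather than convergence to $\sigma^2$, fills in a point the paper passes over silently. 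Where you genuinely diverge is the residual part: the paper bounds $\prob{\Abs{S_{2,n}}>n\varepsilon_n}\leq n\prob{\Abs{X_{2,1,n}-\E X_{2,1,n}}>\varepsilon_n}$ and applies Chebyshev after computing $\E X_{2,1,n}^2\leq (2U/t^2)e^{-tc_n}$ by integrating the exponential tail bound, with $t=\alpha+2(1-\theta)<\tau$; you instead work on the event $E_n=\{\max_{j\leq n}\Abs{X_j}\leq c_n\}$, on which $S_{2,n}$ equals the deterministic quantity $-n\E X_{2,1,n}=O(n\,n^{-tA})$, and control $\prob{E_n^c}=O(n^{1-\tau A})$ by a union bound before invoking Borel--Cantelli. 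Both routes use only (\ref{eq:lapl}) and produce summable series; yours trades the second-moment computation for a maximal event and makes transparent that only $\tau A>2$ (with $A$ at your disposal) is really needed. One bookkeeping caveat: with $c_n\asymp\log n$ the Lipschitz--Taylor constraint forces $d_n\gtrsim n^{2\theta-1}(\log n)^3$, and then $\varepsilon_n^2=4\sigma^2\alpha d_n\log n/n$ yields $\varepsilon_n\asymp n^{\theta-1}(\log n)^{2}$ rather than $(\log n)^{3/2}/n^{1-\theta}$ (the paper's displayed $\varepsilon_n^2=4\alpha^2n^{2\theta-2}c_n^2\log n$ drops one factor $\log n$); this is harmless for the statement, since the extra half power of $\log n$ is absorbed into $n^\delta$, but your final accounting should acknowledge it rather than promise exactly $(\log n)^{3/2}$ from these choices.
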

\begin{proof}
It is obvious that $\prob{\Abs{S_n}>2n\varepsilon}\leq\prob{\Abs{S_{1,n}}>n\varepsilon}+\prob{\Abs{S_{2,n}}>n\varepsilon}$. %Remark that (\ref{eq:covas}) implies that Cox-Grimmett coefficients for the truncated variables $X_{1,j,n}$ also are dominated by $\rho^n$. Thus,
As Theorem~\ref{teo1} applies it follows, taking into account (\ref{rem1}), that,
$$
\prob{\Abs{S_{1,n}}>n\varepsilon_n}\leq 2C\exp\left(-\frac{n\varepsilon_n^2}{4\sigma^2 d_n}\right).
$$
As in the proof of Theorem~\ref{teo2} choose $\varepsilon_n^2=\frac{4\sigma^2\alpha d_n\log n}{n}$, for some $\alpha>1$. This means that $\prob{\Abs{S_{1,n}}>n\varepsilon_n}\leq 2C n^{-\alpha}$, thus defining a convergent series. As before, choose $p_n=n^\theta$, for some $\theta\in\left(\frac12,1\right)$. As in the proof of Theorem~\ref{teo2}, we need to verify that the assumptions of Lemma~\ref{lem2} are satisfied. Taking into account the bounding value for the truncated variables, the assumption of Lemma~\ref{lem2} is now written as $t=\frac{\varepsilon_n}{2\sigma^2 d_n}\leq\frac{d_n-1}{d_n}\frac{1}{c_n p_n}$, which is equivalent to
$$
c_np_n\varepsilon_n=\frac{\alpha^{1/2}}{\sigma}\left(\frac{d_n\log n}{n}\right)^{1/2}c_np_n
  \leq d_n-1\leq d_n.
$$
Therefore, Lemma~\ref{lem2} is applicable if we choose
$$
d_n^{1/2}\geq\frac{\alpha^{1/2}}{\sigma}\frac{(\log n)^{1/2}}{n^{1/2}}c_np_n. %\leq d_n^{1/2}.
$$
%\begin{eqnarray*}
%\lefteqn{c_np_n\varepsilon_n=\frac{\alpha^{1/2}}{\sigma}\left(\frac{d_n\log n}{n}\right)^{1/2}c_np_n
%  \leq d_n-1\leq d_n} \\
% & & \Rightarrow\qquad
%\frac{\alpha^{1/2}}{\sigma}\frac{(\log n)^{1/2}}{n^{1/2}}c_np_n\leq d_n^{1/2}.
%\end{eqnarray*}
Using now the choice for $p_n$, this means we may choose $d_n=\frac{\alpha}{\sigma^2}n^{2\theta-1}c_n^2\log n$, thus obtaining
$$
\varepsilon_n^2=4\alpha^2n^{2\theta-2}c_n^2\log n.
$$
We need now to control $\prob{\Abs{S_{2,n}}>n\varepsilon_n}$. Note first that, taking into account the stationarity,
$$
\prob{{\Abs{S_{2,n}}>n\varepsilon_n}}\leq n\prob{\Abs{X_{2,1,n}-\E X_{2,1,n}}>\varepsilon_{n}}
\leq\frac{n}{\varepsilon_{n}^{2}}\E X_{2,1,n}^2.
$$
Denoting $\bar{F}(x)=\prob{\Abs{X_1}>x}$, we have that
$$
\E X_{2,1,n}^2=-\int_{(c_n,+\infty)} (x-c_n)^2\,\bar{F}(dx)=\int_{c_n}^{+\infty}2(x-c_n)\bar{F}(x)\,dx.
$$
Now, using Markov's inequality, it follows that $\bar{F}(x)\leq e^{-tx}\E e^{t\Abs{X_1}}\leq Ue^{-tx}$, if $t\in(0,\tau)$. Thus, for $t\in(0,\tau)$, by integrating the expression above it follows that
$$
\E X_{2,1,n}^2\leq \frac{2U}{t^2}e^{-tc_n},
$$
so finally,
$$
\prob{\Abs{S_{2,n}}>n\varepsilon_n}\leq\frac{2nU}{t^2\varepsilon_n^2}e^{-tc_n}.
$$
If we now choose $c_n=\log n$ and $t=\alpha+2(1-\theta)$, this upper bound behaves like $n^{-\alpha}$, as the upper bound for $\prob{\Abs{S_{2,n}}>n\varepsilon_n}$. Finally, plug these choices into the expression of $\varepsilon_n$ to explicitly identify the convergence rate, finding
$$
\varepsilon_n=4\alpha^2 \frac{(\log n)^{3/2}}{n^{1-\theta}},
$$
and write $\theta=\frac12+\delta$.
\end{proof}

Note that the convergence rate proved in Theorem~\ref{teo1.1} is close to the optimal convergence rate for the \slln\ for associated random variables which is of order
%$\left(\frac{(\log n)(\log\!\log n)^\eta}{n}\right)^{1/2}$
$\frac{(\log n)^{1/2}(\log\!\log n)^{\eta/2}}{n^{1/2}}$
for arbitrarily small $\eta>0$, as proved by Yang, Su and Yu~\cite{YSY08}.

\section{A Central Limit Theorem}
We now look at the convergence in distribution of sums of L-weakly dependent %quasi-associated
variables, extending a \clt\ (CLT) for associated random variables by Newman~\cite{New80,New84} to the L-weak %quasi-associated
dependence structure. The proof of Newman's result (see Theorem~2 in \cite{New80} or Theorem~12 in \cite{New84}) relies on an inequality for characteristic functions, the Newman inequality for characteristic functions (Theorem~1 in Newman~\cite{New80} or Theorem~10 in Newman~\cite{New84}) that controls the approximation between the joint distribution and the product of the marginal distributions. So, we start by proving a version of this inequality for the present dependence structure.
\begin{theorem}
\label{teo:new}
(Newman's inequality for L-weakly dependent %quasi-associated
random variables) Let $X_1,X_2,\ldots,X_n$ be L-weakly dependent %quasi-associated
random variables. Then, for every $t\in\mathbb{R}$, we have
\begin{equation}
\label{eq:new}
\Abs{\E\left(\prod_{j=1}^{n}e^{itX_j}\right) -\prod_{j=1}^{n}\E\left(e^{itX_j}\right) }
\leq %4t^{2}\sum_{1\leq j<k\leq n-1}\Abs{\Cov\left( X_j,X_k\right)}.
4t^2\sum_{j=1}^{n-1}(n-j)\gamma_j.
\end{equation}
\end{theorem}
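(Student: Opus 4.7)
The plan is to adapt the classical telescoping argument behind Newman's inequality for associated variables, replacing the FKG-type covariance bound by the L-weak dependence inequality of Definition~\ref{def:weak}. First I would introduce the hybrid quantity
$$
C_k=\E\left(\prod_{j=1}^{k}e^{itX_j}\right)\prod_{j=k+1}^{n}\E\left(e^{itX_j}\right),\qquad k=1,\ldots,n,
$$
so that $C_1=\prod_{j=1}^{n}\E(e^{itX_j})$ and $C_n=\E\prod_{j=1}^{n}e^{itX_j}$. Writing the quantity inside the absolute value in (\ref{eq:new}) as the telescoping sum $\sum_{k=1}^{n-1}(C_{k+1}-C_k)$ and pulling out the trailing factor $\prod_{j=k+2}^{n}\E(e^{itX_j})$, whose modulus is bounded by $1$, each increment reduces to
$$
\Abs{C_{k+1}-C_k}\leq\Abs{\Cov\left(\prod_{j=1}^{k}e^{itX_j},\,e^{itX_{k+1}}\right)}.
$$

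Second, I would reduce this complex covariance to real ones. Setting $U_k=X_1+\cdots+X_k$, one has $\prod_{j=1}^{k}e^{itX_j}=\cos(tU_k)+i\sin(tU_k)$ and $e^{itX_{k+1}}=\cos(tX_{k+1})+i\sin(tX_{k+1})$. Expanding the product and subtracting the product of marginal expectations, the complex covariance decomposes as a linear combination, with signs $\pm1$ or $\pm i$, of the four real covariances of $\cos(tU_k)$ or $\sin(tU_k)$ against $\cos(tX_{k+1})$ or $\sin(tX_{k+1})$. Hence $\Abs{C_{k+1}-C_k}$ is dominated by the sum of the absolute values of those four real covariances.

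Third, I would apply Definition~\ref{def:weak} to each of these four covariances with $I=\{1,\ldots,k\}$ and $J=\{k+1\}$. The crucial observation, already implicit in the Lipschitz computations leading to (\ref{eq:quasi}) in the proof of Lemma~\ref{lem2+1}, is that each of the functions $x\mapsto\cos(t\sum_j x_j)$, $x\mapsto\sin(t\sum_j x_j)$, $y\mapsto\cos(ty)$, $y\mapsto\sin(ty)$ has Lipschitz norm equal to $\Abs{t}$, since for instance $\Abs{\cos(t\sum_j x_j)-\cos(t\sum_j y_j)}\leq\Abs{t}\sum_j\Abs{x_j-y_j}$. Consequently each covariance is bounded by $t^2\sum_{i=1}^{k}\gamma_{k+1-i}=t^2\sum_{m=1}^{k}\gamma_m$, and summing the four contributions gives $\Abs{C_{k+1}-C_k}\leq 4t^2\sum_{m=1}^{k}\gamma_m$.

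Finally, summing over $k=1,\ldots,n-1$ and swapping the order of summation,
$$
\sum_{k=1}^{n-1}\sum_{m=1}^{k}\gamma_m=\sum_{m=1}^{n-1}(n-m)\gamma_m,
$$
produces exactly the bound (\ref{eq:new}). The main obstacle I expect is the Lipschitz bookkeeping in the third step: the norm $\norm{\cdot}$ in Definition~\ref{def:weak} must be read against the $\ell^1$ distance on the ambient Euclidean space (consistently with the $\theta$ and $\kappa$ weak dependence conventions of \cite{DL99,Ded07} and with the Lipschitz estimates already used in Lemma~\ref{lem2+1}); otherwise the Lipschitz norm of $\cos(tU_k)$ would carry a factor like $\sqrt{k}$ and the telescoping summation would not collapse cleanly to $\sum_m(n-m)\gamma_m$. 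Once that convention is fixed, every other step is routine.
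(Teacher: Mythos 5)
Your proof is correct and follows essentially the same route as the paper: your telescoping sum over the hybrid quantities $C_k$ is just a reformulation of the paper's iterated add-and-subtract decomposition into the covariances $\Cov\bigl(\prod_{j=1}^{m-1}e^{itX_j},e^{itX_m}\bigr)$, and the trigonometric expansion with Lipschitz norm $\Abs{t}$ (read against the $\ell^1$ distance, exactly as the paper implicitly does) gives the same factor $4t^2\sum_{j=1}^{m-1}\gamma_{m-j}$ per term, hence the same final bound after swapping the order of summation.
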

\begin{proof}
We start by adding and subtracting the appropriate terms to the left side of (\ref{eq:new}) to find,
\begin{eqnarray*}
\lefteqn{\Abs{\E\left(\prod_{j=1}^{n} e^{itX_j}\right) -\prod_{j=1}^{n}\E\left(e^{itX_j}\right) } } \\
 & &\leq\Abs{\E\left(\prod_{j=1}^n e^{itX_j} \right) -
       \E\left(e^{itX_{n}}\right)\E\left( \prod_{j=1}^{n-1} e^{itX_j} \right) } \\
 & &\qquad+\Abs{\E\left(e^{itX_n}\right) \E\left( \prod_{j=1}^{n-1} e^{itX_j}\right)
          -\prod_{j=1}^n\E\left( e^{itX_j}\right)} \\
 & & \leq \Abs{\Cov\left( \prod_{j=1}^{n-1} e^{itX_j},e^{itX_n}\right)}
       +\Abs{\E\left(\prod_{j=1}^{n-1} e^{itX_j}\right)-\prod_{j=1}^{n-1}\E\left(e^{itX_j}\right)}.
\end{eqnarray*}
Iterating now this procedure, we find that
$$
\Abs{\E\left(\prod_{j=1}^{n}e^{itX_j}\right) -\prod_{j=1}^{n}\E\left(e^{itX_j}\right)}
\leq\sum_{m=2}^{n}\Abs{\Cov\left(\prod_{j=1}^{m-1}e^{itX_j},e^{itX_m}\right)}.
$$
To bound the covariance terms above, expand this covariance using the trigonometric representation of the complex exponential to find four terms involving cosine or sinus functions. Now, for example,
$$
\Abs{\Cov\left( \cos \left(t\sum_{j=1}^{m-1}X_j\right),\cos \left( tX_m\right) \right)}\leq t^2\sum_{j=1}^{m-1}\gamma_{m-j},%\Abs{\Cov(X_j,X_m)},
$$
taking into account that $\norm{\cos(tx)}=t$ and using the L-weak dependence %quasi-association
of the sequence $X_i$ of random variables. Obviously, the same upper bound applies to the remaining terms, so we finally have
$$
\Abs{\E\left(\prod_{j=1}^{n} e^{itX_j}\right) -\prod_{j=1}^{n}\E\left(e^{itX_j}\right) }
\leq 4t^2\sum_{m=2}^{n}\sum_{j=1}^{m-1}\gamma_{m-j}=4t^2\sum_{j=1}^{n-1} (n-j)\gamma_j.
%\Abs{\Cov\left(X_j,X_m\right)}.
%=4t^{2}\sum_{1\leq j<m\leq n-1}\Abs{\Cov\left(X_j,X_m\right)}
$$
\end{proof}
Newman's inequality is the main tool for proving a \clt\ for associated random variables (see, for example, Theorem~4.1 in Oliveira~\cite{Oli12}). So, having extended Newman's inequality to L-weakly dependent %quasi-associated
variables, we immediately may state the corresponding CLT. The arguments for the proof are similar to those of Theorem~5 in Newman~\cite{New80}, except on what regards the control of the approximation to independence.
\begin{theorem}
\label{teo:clt}
Let the sequence $X_n$, $n\geq1$, of random variables be centered, L-weakly dependent, strictly stationary and square integrable.
% and let us set
%\begin{equation}
%D=\sum_{\ell=1}^\infty\gamma_\ell<\infty.  \label{hypo}
%\end{equation}
Then, $\frac{1}{\sqrt{n}}S_{n}$ converges in distribution to a centered normal random variable with variance $\sigma^2$.
\end{theorem}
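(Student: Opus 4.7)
The plan is to adapt Newman's blocking strategy for associated variables, replacing the original Newman inequality by Theorem~\ref{teo:new}. First I would choose integer sequences $p_n,q_n\to\infty$ with $p_n=o(\sqrt n)$ and $q_n=o(p_n)$, and partition $\{1,\dots,n\}$ into $k_n\sim n/(p_n+q_n)$ consecutive big blocks $B_j$ of length $p_n$ separated by small gaps $G_j$ of length $q_n$ (plus a terminal remainder). Setting $W_j=\sum_{i\in B_j}X_i$ and $T_n=\sum_{j=1}^{k_n}W_j$, I would write $S_n=T_n+U_n$ with $U_n$ collecting the gaps and terminal remainder. Stationarity, (\ref{eq:sigma}) and the summability of the $\gamma_k$ give $\Var(U_n)=O(k_nq_n+p_n)=o(n)$, hence $U_n/\sqrt n\to 0$ in $L^2$; by Slutsky it suffices to prove $T_n/\sqrt n\Rightarrow\mathcal{N}(0,\sigma^2)$.

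The core step is the approximation to independence, obtained by running the telescoping argument of the proof of Theorem~\ref{teo:new} on the block variables $W_j$ rather than the individual $X_i$. Viewing each $e^{itW_j/\sqrt n}$ as a Lipschitz function of $(X_i)_{i\in B_j}$ with norm $|t|/\sqrt n$, and decomposing the complex exponentials into their real and imaginary parts exactly as in the proof of Theorem~\ref{teo:new}, iteration of the L-weak dependence inequality yields
\[
\Bigl|\E e^{itT_n/\sqrt n}-\prod_{j=1}^{k_n}\E e^{itW_j/\sqrt n}\Bigr|
\leq \frac{4t^2}{n}\sum_{m=2}^{k_n}\sum_{u\in B_1\cup\cdots\cup B_{m-1}}\sum_{v\in B_m}\gamma_{v-u}.
\]
Because indices lying in different big blocks are separated by at least $q_n$, for each $m$ the inner double sum is at most $p_n\,v(q_n)$, so the overall bound is $4t^2 v(q_n)\to 0$.

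Next, by strict stationarity $W_j\stackrel{d}{=}S_{p_n}$ and the product equals $(\phi_{S_{p_n}}(t/\sqrt n))^{k_n}$. A second-order Taylor expansion of the characteristic function at the origin gives $\phi_{S_{p_n}}(t/\sqrt n)=1-\frac{t^2}{2n}\E S_{p_n}^2+o(p_n/n)$; combined with $\E S_{p_n}^2\sim\sigma^2 p_n$ and $k_np_n\sim n$, exponentiation produces $(\phi_{S_{p_n}}(t/\sqrt n))^{k_n}\to e^{-\sigma^2 t^2/2}$. L\'evy's continuity theorem then delivers $T_n/\sqrt n\Rightarrow\mathcal{N}(0,\sigma^2)$, which together with the negligibility of $U_n$ closes the proof.

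The principal technical obstacle is controlling the Taylor remainder uniformly enough to secure the $o(p_n/n)$ bound using only $\E X_1^2<\infty$. The classical inequality $|e^{iy}-1-iy+y^2/2|\leq |y|^3/6\wedge y^2$ combined with a vanishing threshold $A_n$ reduces the matter to uniform integrability of $S_{p_n}^2/p_n$; should pure square integrability be insufficient, a preliminary Lipschitz truncation of the $X_i$ as in Subsection~\ref{sub:rates} (which preserves L-weak dependence since $\norm{g_{c_n}}=1$), followed by a standard $c_n\to\infty$ approximation, handles the issue.
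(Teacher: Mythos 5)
Your outline follows the Bernstein big-block/small-gap scheme with block length $p_n\to\infty$, whereas the paper keeps the block length $p$ \emph{fixed}, lets $n\to\infty$, and only afterwards lets $p\to\infty$, absorbing the residual dependence through the bound $2t^2\bigl(D-\frac{1}{p}\sum_{j,j'=1}^p\gamma_{\vert j'-j\vert}\bigr)$ and the term $\vert\sigma_p^2-\sigma^2\vert$. Your independence-approximation step is sound and in fact cleaner than the paper's: iterating the covariance inequality on the blocks, with the gaps of length $q_n$ guaranteeing a separation of indices, gives the bound $4t^2v(q_n)\to 0$ directly, so no residual term survives. The negligibility of the gap contribution $U_n/\sqrt n$ in $L^2$ is also fine, since $\vert\Cov(X_i,X_j)\vert\le\gamma_{\vert i-j\vert}$ and the series of coefficients converges.

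The genuine gap is exactly the step you flag in your last paragraph, and it is the crux, not a side issue: to get $\bigl(\phi_{S_{p_n}}(t/\sqrt n)\bigr)^{k_n}\to e^{-\sigma^2t^2/2}$ you need the Taylor remainder to be $o(p_n/n)$, which (after splitting with $\min(\vert y\vert^3,y^2)$) amounts to a Lindeberg condition, i.e.\ uniform integrability of $S_{p_n}^2/p_n$. This does \emph{not} follow from $\E X_1^2<\infty$ plus L-weak dependence without further argument; it is precisely to avoid this that the paper (following Newman) works with fixed $p$, where the third term of its decomposition is handled by the classical CLT for i.i.d.\ copies of $S_p/\sqrt p$ and no array condition is needed. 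Your proposed remedy, truncating at $c_n\to\infty$, is only gestured at and hides a real difficulty: for the unbounded remainders $V_i=X_i-g_{c_n}(X_i)+\E g_{c_n}(X_i)$, the cross-covariances are still only bounded by the same $\gamma_{\vert i-j\vert}$, which do not shrink as $c_n\to\infty$, so showing $\frac{1}{n}\Var\bigl(\sum_i V_i\bigr)\to 0$ requires an additional interpolation (use Cauchy--Schwarz for lags up to $M$, the tail $v(M)$ beyond, then let $c_n\to\infty$ and $M\to\infty$); moreover the limiting variance of the truncated array must be re-identified and shown to tend to $\sigma^2$. None of this is fatal -- the route can be completed -- but as written the decisive step of the proof is missing, while the paper's fixed-$p$/double-limit argument gets the result from square integrability alone.
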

\begin{proof}
The proof is based on a decomposition $S_n$ similar to (\ref{decomp1}), into the sum of blocks of size $p\in\mathbb{N}$, now being fixed, and using (\ref{eq:new}). So, given $p\in\mathbb{N}$, put $m=\lfloor\frac{n}{p}\rfloor$, and redefine the blocks
$$
Y_{j,p}=\sum_{k=(j-1) p+1}^{jp}X_k,\: j=1,\ldots,m,\qquad \text{and}\qquad Y_{m+1,p}=\sum_{k=mp+1}^{n}X_k.
$$
Let $\varphi_n( t)$ represent the characteristic function of $\frac{1}{\sqrt{n}}S_{n}$. We will establish that $\Abs{\varphi_n(t) -e^{-t^{2}\sigma^{2}/2}} \longrightarrow 0$. Let us start by writing
\begin{equation}
\label{eq:dec}
\renewcommand{\arraystretch}{2}
\begin{array}{rcl}
\displaystyle\Abs{\varphi _{n}\left( t\right) -e^{-\frac{t^{2}\sigma ^{2}}{2}}}
& \leq &
\displaystyle\Abs{\varphi_{n}(t) -\varphi_{mp}(t)} +\Abs{\varphi_{mp}(t)-\varphi_p^m (t)} \\
 & &
\displaystyle+\Abs{\varphi_p^m(t) - e^{-\frac{t^{2}\sigma _{p}^{2}}{2}}}
    +\Abs{e^{-\frac{t^{2}\sigma _{p}^{2}}{2}} - e^{-\frac{t^{2}\sigma ^{2}}{2}}},
\end{array}
\renewcommand{\arraystretch}{1}
\end{equation}
where $\sigma_p^2=\frac{1}{\sqrt{p}}\Var(S_{p})$, and prove that each term of the right hand side goes to zero. Let $p$ be fixed for the time being. As what concerns the first term of the upper bound in (\ref{eq:dec}), we have, using Cauchy's inequality,
\begin{eqnarray*}
\lefteqn{\Abs{\varphi_n(t) -\varphi_{mp}(t)} \leq \E\Abs{\exp \left(\frac{it}{\sqrt{n}}S_{n}
\right) -\exp \left(\frac{it}{\sqrt{mp}}S_{mp}\right) }} \\
 & &\leq \Abs{t}\E\Abs{\frac{S_{n}}{\sqrt{n}}-\frac{S_{mp}}{\sqrt{mp}}}
 \leq \Abs{t}\left( \E\left( \frac{S_{n}}{\sqrt{n}}-\frac{S_{mp}}{\sqrt{mp}}\right)^2\right) ^{1/2} \\
 & & \leq \Abs{t}\left( \frac{1}{\sqrt{mp}}-\frac{1}{\sqrt{n}}\right) \left(\E S_{mp}^{2}\right)^{1/2} +\frac{\Abs{t}}{\sqrt{n}}\left(\E Y_{m+1,p}^{2}\right)^{1/2}.
\end{eqnarray*}
It follows from the stationarity of the sequence of random variables $X_i$ that, for $m$ large enough, $\E S_{mp}^2\leq 2\sigma^2mp$ and $\E Y_{m+1,p}^2\leq 2\sigma^2(n-mp)<2\sigma^2 p$. Thus, as $n\longrightarrow+\infty$, which implies that $m\longrightarrow+\infty$, it follows
$$
\Abs{\varphi_n(t) -\varphi_{mp}(t)}\leq \sqrt{2}\Abs{t}\sigma\left(1-\frac{\sqrt{mp}}{\sqrt{n}}+\frac{1}{\sqrt{m}}\right)\longrightarrow 0.
$$
The second term in (\ref{eq:dec}) represents the difference between the joint distribution of the blocks and what we would find if they were independent. To control this term, define $W_{j,p}=\frac{1}{\sqrt{p}}Y_{j,p}$. Taking into account the stationarity of the sequence $X_i$, the characteristic function of $W_{j,p}$ is $\varphi_p(t)$. As the variables $W_{j,p}$ are transformations of $X_{(j-1)p+1},\ldots,X_{jp}$, %quasi-associated
it follows from the definition of L-weak dependence, representing the exponential with the trigonometric functions as done for the proof of Theorem~\ref{teo:new}, that
\begin{equation}
\label{decomp_gamma}
\begin{array}{l}
\displaystyle\Abs{\varphi_{mp}(t) -\varphi_p^m(t)} \\
\displaystyle\qquad
   = \Abs{\E\left(\exp \left( \frac{it}{\sqrt{m}}\sum_{k=1}^m W_{k,p}\right) \right) -\prod_{k=1}^m\E\exp\left( \frac{it}{\sqrt{m}}W_{k,p}\right)} \\
\displaystyle\qquad \leq
  \frac{4t^{2}}{mp}\sum_{\ell=2}^{m-1}\sum_{j=1}^{(\ell-1)p}\sum_{j^\prime=(\ell-1)p+1}^{\ell p}\gamma_{j^\prime-j} \\
\displaystyle\qquad =\frac{2t^2}{mp}\left(\sum_{j,j^\prime=1}^{mp}\gamma_{\Abs{j^\prime-j}}
     -m\sum_{j,j^\prime=1}^p\gamma_{\Abs{j^\prime-j}}\right).
%     -\sum_{\ell,\ell^\prime=(m-1)p+1}^{mp}\gamma_{\Abs{\ell^\prime-\ell}} \right) \nonumber \\
%  & & =\frac{2t^2}{mp}\Bigl(
%        \sum_{\ell=1}^{mp-1}(mp-\ell)\gamma_\ell-\sum_{\ell=1}^{(m-1)p-1}((m-1)p-\ell)\gamma_\ell
%        -\sum_{\ell=1}^{p}(p-\ell)\gamma_\ell\Bigr). \nonumber
\end{array}
\end{equation}
It is easy to verify that
\begin{equation}
\label{def:D}
  \frac{1}{mp}\sum_{j,j^\prime=1}^{mp}\gamma_{\Abs{j^\prime-j}}
=\sum_{j=1}^{mp-1}(1-\frac{j}{mp})\gamma_j\longrightarrow
D=\sum_{\ell=1}^\infty\gamma_\ell<\infty,
\end{equation}
which gives us,
$$
\limsup_{m\rightarrow+\infty}\Abs{\varphi_{mp}(t) -\varphi_p^m(t)}\leq 2t^2\left(D-\frac{1}{p}\sum_{j,j^\prime=1}^p\gamma_{\Abs{j^\prime-j}}\right).
$$
%Taking into account (\ref{hypo}), the first two terms above both converge to $\sum_\ell\gamma_\ell$, while the last one, remembering that $p$ is fixed, converges to 0. That is, we have that $\Abs{\varphi_{n}(t) -\varphi_p^m(t)}\longrightarrow 0$.
%using again the stationarity of the random variables.
%\begin{equation*}
%\sum\limits_{i\neq k}^{m}\left\vert Cov\left( T_{i,p},T_{k,p}\right)
%\right\vert =\frac{1}{p}\left[ \sum\limits_{i,k=1}^{mp}\left\vert Cov\left(
%X_{i},X_{k}\right) \right\vert -p\sum\limits_{i,k=1}^{p}\left\vert Cov\left(
%X_{i},X_{k}\right) \right\vert \right]
%\end{equation*}
%
%and%
%\begin{equation*}
%\left\vert \varphi _{n}\left( t\right) -\varphi _{pl}\left( t\right)
%\right\vert \leq 4t^{2}\left[ \frac{1}{mp}\sum\limits_{i,k=1}^{mp}\left\vert
%Cov\left( X_{i},X_{k}\right) \right\vert -\frac{1}{p}\sum\limits_{i,k=1}^{p}%
%\left\vert Cov\left( X_{i},X_{k}\right) \right\vert \right]
%\end{equation*}
%Setting now $D=\lim_{n\rightarrow +\infty }\frac{1}{n}\sum_{j,k=1}^{n}\Abs{\Cov(X_j,X_k)}$,
%%comment%
%%Changed the writing to get a better inline expression
%it follows that
%$$
%\Abs{\varphi_{n}(t) -\varphi_{mp}(t)}
%\leq 4t^{2}\left(D-\frac{1}{p}\sum_{j,k=1}^{p}\Abs{\Cov(X_j,X_k)} \right),
%$$
%thus bounding the second term in (\ref{eq:dec}).
For the third term in (\ref{eq:dec}), the classical \clt\ for independent random variables implies that
$$
\lim_{m\rightarrow+\infty}\Abs{\varphi_{p}^{m}(t) - e^{-\frac{t^2\sigma_p^2}{2}}}\longrightarrow 0.
$$
Concerning the last term in (\ref{eq:dec}), we have
$\Abs{e^{-t^2\sigma_p^2/2} -e^{-t^2\sigma^2/2}}\leq \frac{t^2}{2}\Abs{\sigma_p^2-\sigma^2}$.
So, finally we obtain,
$$
\limsup_{n\rightarrow+\infty}\Abs{\varphi_n(t) - e^{-\frac{t^2\sigma^2}{2}}}
    \leq \frac{t^2}{2}\Abs{\sigma_p^2-\sigma^2}
    +2t^2\left(D-\frac{1}{p}\sum_{j,j^\prime=1}^p\gamma_{\Abs{j^\prime-j}}\right).
$$
Note that the left hand side above does not depend on $p$. Allowing now $p\longrightarrow+\infty$ and taking into account that $\lim_{p\rightarrow+\infty}\sigma_p^2=\sigma^2$,
%comment
%Changed as above to get a better inline expression. Changed also the sentences.
it follows that %Consequently we get
$$
\limsup_{n\rightarrow+\infty}\Abs{\varphi_n(t) - e^{-\frac{t^2\sigma^2}{2}}} =0.
$$
\end{proof}

We now prove a functional version of Theorem~\ref{teo:clt}, giving sufficient conditions for the convergence in distribution of the partial sums process:
\begin{equation}
\label{eq:partsum}
\xi_n(t)=\frac{1}{\sqrt{n}}\sum_{j=1}^{\lfloor nt\rfloor}X_j,\qquad 0\leq t\leq 1.
\end{equation}
\begin{theorem}
Let  the sequence of random variables $X_n$, $n\geq1$, be centered, L-weakly dependent, strictly stationary satisfying $\E\Abs{X_1}^{4+\delta}<\infty$, for some $\delta>0$, and (\ref{eq:sigma}). If the L-weak dependence coefficients $\gamma_k$, $k\geq 1$, are decreasing such that $\gamma_k=O( k^{-2-8/\delta})$, then $\xi_n(t)$, $n\geq1$, converges in distribution to $\sigma W$, where $W$ is a standard Brownian motion in the Skhorohod space $\mathbb{D}[0,1]$.
\end{theorem}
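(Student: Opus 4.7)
The plan is to establish the functional convergence $\xi_n \Rightarrow \sigma W$ in the Skorokhod space $\mathbb{D}[0,1]$ by the standard two-step procedure: first, convergence of the finite-dimensional distributions of $\xi_n$ to those of $\sigma W$; and second, tightness of the sequence of laws of $\xi_n$ on $\mathbb{D}[0,1]$.

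For the finite-dimensional convergence, I would fix $0 = t_0 < t_1 < \cdots < t_k \leq 1$ and work with the increments $\Delta_\ell^n = \xi_n(t_\ell) - \xi_n(t_{\ell-1}) = \tfrac{1}{\sqrt{n}}\sum_{j=\lfloor nt_{\ell-1}\rfloor+1}^{\lfloor nt_\ell\rfloor} X_j$. By stationarity and Theorem~\ref{teo:clt} applied to the rescaled subsum over the $\ell$-th block, each $\Delta_\ell^n$ converges marginally to $\mathcal{N}(0, \sigma^2(t_\ell - t_{\ell-1}))$. To promote these marginal convergences to joint convergence toward independent Gaussians (which is the covariance structure of the increments of $\sigma W$), I would apply the iterative covariance decomposition underlying Theorem~\ref{teo:new} to the joint characteristic function $\E\prod_\ell e^{is_\ell \Delta_\ell^n}$: the complex exponential $e^{is_\ell \Delta_\ell^n}$, viewed as a function of the $X_j$ in the $\ell$-th block, has Lipschitz norm at most $|s_\ell|/\sqrt{n}$, so the L-weak dependence inequality bounds each successive covariance term by $C\,\tfrac{\max_m|s_m|^2}{n}\sum_{g\geq 1}\min(g, n)\,\gamma_g$. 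Since the decay $\gamma_k = O(k^{-2-8/\delta})$ implies $\sum_g g\gamma_g < \infty$, this series is $O(1)$ and the total discrepancy between the joint characteristic function and the product of marginals is $O(k/n)\to 0$. The Cram\'er--Wold device then yields the finite-dimensional convergence to the Gaussian process $\sigma W$.

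For tightness in $\mathbb{D}[0,1]$, Billingsley's criterion reduces the task to a fourth-moment bound of the form
$$
\E\bigl|\xi_n(t) - \xi_n(s)\bigr|^4 \leq C(t-s)^2, \qquad t - s \geq 1/n,
$$
uniformly in $n$, equivalently $\E S_m^4 \leq C m^2$ uniformly in $m$ (by stationarity). This is the main obstacle. I would derive it along the Doukhan--Louhichi line: expand $\E S_m^4$ as a sum of four-point correlations; for each ordered quadruple $i_1 \leq i_2 \leq i_3 \leq i_4$, separate the indices at the largest gap $g$ and apply the L-weak dependence inequality to Lipschitz truncations of the monomial $X_{i_1}X_{i_2}X_{i_3}X_{i_4}$; then use H\"older's inequality with conjugate exponents $(4+\delta)/4$ and $(4+\delta)/\delta$ to trade the truncation level against a fractional power of $\gamma_g$ and the assumed finiteness of $\E|X_1|^{4+\delta}$. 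The rate $\gamma_k = O(k^{-2-8/\delta})$ is precisely calibrated so that, after summing over gap configurations, the off-diagonal weighted series converges and produces the required $O(m^2)$ bound; diagonal and near-diagonal contributions are controlled directly by $\E|X_1|^{4+\delta}<\infty$. Once this moment inequality is in place, Billingsley's criterion supplies tightness, and combined with the finite-dimensional convergence (plus a routine argument absorbing the negligible tail block beyond $\lfloor nt \rfloor$, in the spirit of Remark~\ref{Rmk-Remaining}) it delivers $\xi_n \Rightarrow \sigma W$ in $\mathbb{D}[0,1]$.
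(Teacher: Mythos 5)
Your proposal is correct in substance and its first half is essentially the paper's argument: the finite-dimensional convergence is obtained exactly as in the paper, by comparing the joint characteristic function of the increments with the product of the marginal ones via the iterative covariance decomposition of Theorem~\ref{teo:new}, using the Lipschitz norm $\Abs{s_\ell}/\sqrt{n}$ and the marginal normality from Theorem~\ref{teo:clt} (the paper gets the cross-block error to vanish by a Ces\`aro cancellation needing only $\sum_g\gamma_g<\infty$, while your boundary-pair count needs $\sum_g g\gamma_g<\infty$, which the hypothesis supplies). Where you genuinely diverge is tightness: the paper does not prove a moment bound itself, but verifies the two conditions (\ref{eq:lou}) of Theorem~5 of Doukhan and Louhichi~\cite{DL99} --- namely $\sum_j j\Abs{\E(X_1X_{j+1})}<\infty$ and $\Cov(X_iX_j,X_kX_\ell)=O((k-j)^{-2})$ --- using the truncation $g_c$, the bound $O(c^{-\delta}+c^4\gamma_{k-j})$ and the calibration $c=\gamma_{k-j}^{-1/(4+\delta)}$, which gives $\gamma_{k-j}^{\delta/(4+\delta)}=O((k-j)^{-2})$; you instead propose a self-contained fourth-moment increment bound $\E S_m^4\leq Cm^2$ fed into Billingsley's criterion. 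Your route works, and it rests on exactly the same truncation-plus-H\"older calibration, but two points deserve care when writing it out. First, splitting at the \emph{largest} gap (as you say) is essential: with a bound depending only on the middle gap, the naive count of quadruples gives $O(m^3)$, and it is the confinement of a quadruple with largest gap $g$ to a window of length $3g$ that yields $m\sum_{g\leq m}(g+1)^2\cdot O(g^{-2})=O(m^2)$; note this sum does not converge as a series --- the $O(m^2)$ comes from the truncation at $g\leq m$, so the bound is borderline, exactly as the exponent $-2-8/\delta$ is borderline in the paper's verification of (\ref{eq:lou}). Second, when the largest gap is the middle one you must also keep the factorized term $\E(X_iX_j)\E(X_kX_\ell)$, controlled by $\bigl(\sum_{i\leq j}\Abs{\E(X_iX_j)}\bigr)^2\leq\bigl(m\sum_g\gamma_g\bigr)^2=O(m^2)$, and when it is an edge gap you need the centering $\E X_{i_1}=0$ to replace the expectation by a covariance. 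With those details filled in, your argument is a valid, more elementary alternative to citing \cite{DL99}, at the cost of carrying out the combinatorial bookkeeping that the paper outsources.
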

\begin{proof}
The proof follows the usual arguments to prove the convergence with respect to the Skhorohod topology: prove the convergence of the finite dimensional distributions and the tightness of the sequence. The one dimensional distributions follows directly from Theorem~\ref{teo:clt}. Choose now $k$ points such that $0=u_0\leq u_1<u_2<\cdots<u_k\leq 1$. We shall prove the asymptotic normality of the random vector
$$
H(u_1,\ldots,u_k)=\frac{1}{\sqrt{n}}\left(\xi_n(u_1),\xi_n(u_2)-\xi_n(u_1),\ldots,\xi_n(u_k)-\xi_n(u_{k-1})\right).
$$
Note that, due to the stationarity, it follows again from Theorem~\ref{teo:clt} that each coordinate of $H(u_1,\ldots,u_k)$ is asymptotically centered normal with variance $(u_s-u_{s-1})\sigma^2$, $s=1,\ldots,k$. We now compare the characteristic function of the random vector with the product of the characteristic functions of its margins. Denote on the sequel $T=\max_{s=1,\ldots,k}\Abs{t_s}$. From the definition of L-weak dependence, reasoning as for the decomposition (\ref{decomp_gamma}), taking into account that $\norm{\cos(\sum_j t_jX_j)}=\max_{j=1,\ldots,k}\Abs{t_j}$, it follows that, for every $t_1,\ldots,t_k\in\mathbb{R}$,
$$
\renewcommand{\arraystretch}{3}
\begin{array}{l}
\displaystyle
\left\vert\E\exp\left(\frac{i}{\sqrt{n}}\sum_{s=1}^k t_s\left(\xi_n(u_s)-\xi_n(u_{s-1})\right)\right)\right.  \\
\displaystyle\qquad\qquad\left.
-\prod_{s=1}^k\E\exp\left(\frac{i t_s}{\sqrt{n}}\left(\xi_n(u_s)-\xi_n(u_{s-1})\right)\right)\right\vert \\
\displaystyle\qquad\qquad\qquad\qquad
%\leq\frac{4T^2}{n}\sum_{s=2}^{k-1}\sum_{j=1}^{s-1}
%      \sum_{\ell=1}^{\lfloor nu_{s-1}\rfloor}
%          \sum_{\ell^\prime=\lfloor nu_{s-1}\rfloor+1}^{\lfloor nu_s\rfloor}\gamma_{\ell^\prime-\ell} \\
\leq\frac{4kT^2}{n}\sum_{s=2}^{k-1}
      \sum_{j=1}^{\lfloor nu_{s-1}\rfloor}
          \sum_{j^\prime=\lfloor nu_{s-1}\rfloor+1}^{\lfloor nu_s\rfloor}\gamma_{j^\prime-j} \\
%\displaystyle\qquad\qquad\qquad
%=4\sum_{s=1}^{k-1}\sum_{j=1}^{s-1}\frac{t_jt_s}{n}\left(
%\sum_{\ell,\ell^\prime=1}^{\lfloor nu_s\rfloor}\gamma_{\ell^\prime-\ell}
%-\sum_{\ell,\ell^\prime=1}^{\lfloor nu_{s-1}\rfloor}\gamma_{\ell^\prime-\ell}
%-\sum_{\ell,\ell^\prime=1}^{\lfloor n(u_s-u_{s-1})\rfloor}\gamma_{\ell^\prime-\ell}
%\right).
\displaystyle\qquad\qquad\qquad\qquad
=\frac{2T^2}{n}\left(\sum_{j,j^\prime=1}^{\lfloor nu_k\rfloor}\gamma_{\Abs{j^\prime-j}}
-\sum_{s=1}^k\sum_{j,j^\prime=\lfloor nu_{s-1}\rfloor+1}^{\lfloor nu_s\rfloor}\gamma_{\Abs{j^\prime-j}}
\right).
\end{array}
\renewcommand{\arraystretch}{1}
$$
Note that our assumption on the decrease rate of the $\gamma_j$ coefficients implies the convergence of the corresponding series. So, defining $D$ as in (\ref{def:D}), the above expression is easily seen to converge to $2T^2D(u_k-u_1-(u_2-u_1)-\cdots-(u_k-u_{k-1}))=0$, hence the asymptotic normality of $H(u_1,\ldots,u_k)$ follows.

To complete the proof, we still have to prove the tightness. We follow the arguments in the proof of Theorem~5 in Doukhan and Louhichi~\cite{DL99}, thus needing to prove that
\begin{equation}
\label{eq:lou}
\renewcommand{\arraystretch}{1.75}
\begin{array}{l}
\displaystyle\sum_{j=1}^\infty j\Abs{\E(X_1X_{j+1})}<\infty, \\
\displaystyle\Cov(X_iX_j,X_kX_\ell)=O((k-j)^{-2}),\quad 1\leq i\leq j<k\leq\ell.
\end{array}
\renewcommand{\arraystretch}{1}
\end{equation}
As what concerns the first condition, as the variables are centered and taking into account the assumption on the decrease rate of the $\gamma_\ell$ coefficients:
$$
\sum_{j=1}^\infty j\Abs{\E(X_1X_{j+1})}\leq\sum_{j=1}^\infty j\gamma_j<\infty.
$$
Concerning the second condition in (\ref{eq:lou}), write first, for some $c>0$ and for each $k\geq 1$, $V_k=X_k-(g_c(X_k)-\E g_c(X_k))$, using the function $g_c(\cdot)$ introduced in Subsection~\ref{sub:rates}. Using  this representation the covariance $\Cov(X_iX_j,X_kX_\ell)$ is written as a sum of terms of the form $\Cov(U_1U_2,U_3U_4)$ where each $U_j$ is either bounded by $2c$ or chosen among $V_i$, $V_j$, $V_k$ or $V_\ell$. If all the $U_j$'s are bounded by $2c$, from the definition of L-weak dependence and the assumption that coefficients are decreasing, it follows that
$$
\Abs{\Cov(U_iU_j,U_kU_\ell)}\leq c^4(\gamma_{k-i}+\gamma_{k-j}+\gamma_{\ell-i}+\gamma_{\ell-k})\leq 4c^4\gamma_{k-j}.
$$
If exactly one of the $U_j$'s is not bounded, say $U_i=Y_i$, we have that, using H\"older inequality followed by Markov inequality,
$$
\Abs{\Cov(Y_iU_j,U_kU_\ell)}\leq 2c^3\E\Abs{Y_i}=2c^3\E(\Abs{X_1}\mathbb{I}_{\Abs{X_1}>c})
\leq 2c^{-\delta}\E\Abs{X_1}^{4+\delta}.
$$
For the remaining terms, we may reason in the same way, always finding an upper bound that, up to multiplication by a constant, is $c^{-\delta}\E\Abs{X_1}^{4+\delta}$. Thus, summing all the terms, we have that $\Cov(X_iX_j,X_kX_\ell)=O(c^{-\delta}+c^4\gamma_{k-j})$. Choose now $c=\gamma_{k-j}^{-1/(4+\delta)}$ to find $\Cov(X_iX_j,X_kX_\ell)=O(\gamma_{k-j}^{\delta/(4+\delta)})=O(k-j)^{-2}$, taking into account the decrease rate for the dependence coefficients. So, the tightness follows, which concludes the proof of the theorem.
\end{proof}

This result complements Theorem~5 in Doukhan and Louhichi~\cite{DL99}. Indeed, these authors proved a similar result, but considering different forms of weak dependence, as expressed by their $\psi$ coefficients which involved the sum of the Lipschitz norms of the transformations instead of the product as we considered in Definition~\ref{def:weak}. It is still possible to prove a result concerning the convergence of the empirical process, again somehow in a similar way as done in Doukhan and Louhichi~\cite{DL99}. For this later result, in \cite{DL99} a different dependence coefficient was considered, so that their result implies directly the corresponding one for L-weakly dependent variables. We state the result here, without proof, for easier reference on asymptotic results on L-weakly dependent variables.

\begin{theorem}
Let $X_n$, $n\geq 1$, be centered, L-weakly dependent, strictly stationary random variables uniformly distributed on $[0,1]$. If the L-weak dependence coefficients $\gamma_k$, $k\geq 1$, are such that $\gamma_k=O(k^{-15/2-\delta})$, for some $\delta>0$, then
$\zeta_n(t)=\sqrt{n}\left(\frac{1}{n}\sum_{j=1}^n\mathbb{I}_{[0,t]}(X_j)-t\right)$, $t\in[0,1]$, $n\geq 1$, converges in distribution in the Skhorohod space $\mathbb{D}[0,1]$ to a centered Gaussian process indexed by $[0,1]$ with covariance operator
$$
\Gamma(s,t)=\sum_{k=1}^{+\infty}\Cov\left(\mathbb{I}_{[0,s]}(X_1),\mathbb{I}_{[0,t]}(X_{k})\right).
$$
\end{theorem}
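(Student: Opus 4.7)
The plan is to follow the standard two-step strategy for weak convergence in the Skorokhod space $\mathbb{D}[0,1]$: establish the convergence of finite-dimensional distributions and the tightness of the sequence $\{\zeta_n\}$. The main obstacle throughout is that the summands $\mathbb{I}_{[0,t]}(X_j)-t$ are not Lipschitz in $X_j$, so Definition~\ref{def:weak} cannot be applied to them directly. To circumvent this, I would use a standard Lipschitz sandwich: for each $t\in[0,1]$ and $\eta>0$, introduce piecewise-affine functions $\phi^{\pm}_{t,\eta}$ with $\phi^{-}_{t,\eta}\le \mathbb{I}_{[0,t]}\le \phi^{+}_{t,\eta}$, Lipschitz norm $1/\eta$, and $L^1$-error $O(\eta)$ (using the uniform distribution of $X_1$). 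Every covariance bound for the indicators will proceed by first passing to these approximations, applying the L-weak dependence estimate, and optimizing in $\eta$.

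For the finite-dimensional convergence, fix $0\le u_1<\cdots<u_k\le 1$ and $a_1,\ldots,a_k\in\mathbb{R}$. The variables $\sum_s a_s\bigl(\phi^{\pm}_{u_s,\eta}(X_j)-\E\phi^{\pm}_{u_s,\eta}(X_1)\bigr)$ are centred Lipschitz transformations of the L-weakly dependent sequence $X_j$, with Lipschitz norm at most $\|a\|_1/\eta$, and hence themselves form an L-weakly dependent stationary sequence. Theorem~\ref{teo:clt} then yields a centred normal limit for the corresponding normalised partial sum for each fixed $\eta>0$. Letting $\eta\to 0$ and using the sandwich together with the $L^2$ control on the approximation error, one recovers the asymptotic normality of $\sum_s a_s\zeta_n(u_s)$, so the Cramér--Wold device gives the convergence of all finite-dimensional distributions. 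The identification of the limiting covariance $\Gamma(s,t)$ follows from the absolute convergence of the series $\sum_k\Cov(\mathbb{I}_{[0,s]}(X_1),\mathbb{I}_{[0,t]}(X_{1+k}))$, which is justified by the same smoothing argument together with the summability of $\gamma_k$.

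The core difficulty, and the expected main obstacle, is tightness; it is here that the strong decay $\gamma_k=O(k^{-15/2-\delta})$ is required. Following Theorem~5 of \cite{DL99}, tightness in $\mathbb{D}[0,1]$ reduces to verifying the two conditions (\ref{eq:lou}) for the centred indicator variables $Y_j(t)=\mathbb{I}_{[0,t]}(X_j)-t$, uniformly in $t\in[0,1]$. The first condition, $\sum_j j\,|\E(Y_1(t)Y_{j+1}(t))|<\infty$, follows from L-weak dependence combined with the Lipschitz approximation: $|\Cov(\phi^{\pm}_{t,\eta}(X_1),\phi^{\pm}_{t,\eta}(X_{j+1}))|\le \eta^{-2}\gamma_j$, and optimizing in $\eta$ against the $O(\eta)$ approximation bias still leaves a summable tail under the hypothesis on $\gamma_j$. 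The harder ingredient is the fourth-order estimate
$$\Cov(Y_iY_j,Y_kY_\ell)=O\bigl((k-j)^{-2}\bigr),\qquad 1\le i\le j<k\le \ell;$$
expanding each factor through the sandwich, the L-weak dependence bound yields a term of order $\eta^{-2}\gamma_{k-j}$ from the smoothed covariance, together with remainder terms of order $\eta^{\alpha}$ for some explicit $\alpha>0$ arising from truncation and Hölder estimates analogous to those in the proof of the previous theorem. Balancing the two sources of error leads to $\Cov(Y_iY_j,Y_kY_\ell)=O(\gamma_{k-j}^{\beta})$ for an exponent $\beta$ which, combined with the assumption $\gamma_k=O(k^{-15/2-\delta})$, produces the required $O((k-j)^{-2})$ rate. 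Once (\ref{eq:lou}) is established, the tightness argument of \cite{DL99} applies verbatim, and this, together with the identified finite-dimensional limits, completes the proof.
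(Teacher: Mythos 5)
There is a genuine gap, and it is in the tightness step. The conditions (\ref{eq:lou}) come from Theorem~5 of \cite{DL99}, which is a tightness criterion for the \emph{partial-sum} process $s\mapsto n^{-1/2}\sum_{j\leq ns}Y_j$, i.e.\ for a process indexed by the time fraction $s$. Verifying (\ref{eq:lou}) for the centred indicators $Y_j(t)=\mathbb{I}_{[0,t]}(X_j)-t$, even uniformly in $t$, would only control the partial-sum process of $Y_j(t)$ for each fixed level $t$; it says nothing about the increments of the empirical process in its own index, $\zeta_n(t)-\zeta_n(s)=n^{-1/2}\sum_{j=1}^n\bigl(\mathbb{I}_{(s,t]}(X_j)-(t-s)\bigr)$, whose moments must become small as $\Abs{t-s}\to 0$ in order to get equicontinuity in $D[0,1]$. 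Your smoothing-and-balancing bounds of the form $\eta^{-2}\gamma_{k-j}$ versus $\eta^{\alpha}$ lose all dependence on the length $t-s$ of the interval, so they cannot produce the required modulus-of-continuity estimate; what is needed is a dedicated moment (or maximal) inequality for sums of centred indicators of a \emph{small} interval, and this is exactly where the strong decay $\gamma_k=O(k^{-15/2-\delta})$ enters in the empirical-process theorem of \cite{DL99}, which uses a dependence coefficient tailored to indicator functions rather than the Lipschitz-based condition of Theorem~5. Applying Theorem~5 of \cite{DL99} ``verbatim'' at this point is therefore a misreading of which result is being invoked.

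For context: the paper gives no proof of this statement at all; it observes that the empirical-process result of \cite{DL99} is stated for a dependence coefficient that is dominated by the L-weak dependence coefficients, so their theorem applies directly, and the statement is recorded only for reference. Your finite-dimensional step (Lipschitz sandwich of the indicators, Theorem~\ref{teo:clt} for the smoothed variables, then $\eta\to0$ and Cram\'er--Wold) is a reasonable sketch, though even there the interchange of the limits $n\to\infty$ and $\eta\to0$ needs a variance bound for the non-Lipschitz error terms $\mathbb{I}_{[0,t]}(X_j)-\phi^{\pm}_{t,\eta}(X_j)$ that is uniform in $n$, which requires a further smoothing argument you only allude to. But the decisive missing ingredient is the correct tightness criterion for the process indexed by the level $t$.
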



\begin{thebibliography}{99}

\bibitem{Abe} R.T.\ Abebe and H. Zegeye, Mann and Ishikawa-Type Iterative
Schemes for Approximating Fixed Points of Multi-valued Non-Self Mappings,
Mediterranean Journal of Mathematics, $\left( 2016\right) ,1-16$.

\bibitem{BS07} A. Bulinski and A. Shashkin, Limit theorems for associated random fields and related systems, World Scientific Publishing Co. Pte. Ltd., Hackensack, NJ, 2007.
%
\bibitem{BulSuq01} A. Bulinski and C. Suquet, Normal approximation for quasi-associated random fields, Statist. Probab. Lett. 54 (2001), pp. 215--226.

\bibitem{Ded07} J. Dedecker, P. Doukhan, G. Lang, J.R. Leon, S. Louhichi and C. Prieur,  Weak Dependence: With Examples and Applications, Springer, New York, 2007.

\bibitem{DL99} P. Doukhan and S. Louhichi, A new weak dependence condition and applications to moment inequalities, Stoch. Proc. Appl. 84 (1999), pp. 313--342.

\bibitem{EPW67} J. Esary, F. Proschan and D. Walkup, Association of random variables with applications, Ann. Math. Statist. 38 (1967), pp. 1466--1474.

\bibitem{IR98} D. Ioannides and G. Roussas, Exponential inequality for associated random variables, Statist. Probab. Letters 42 (1998), pp. 423--431.

\bibitem{JP83} K. Joag-Dev and F. Proschan, Negative association of random variables with applications, Ann. Statist. 11 (1983), pp. 286--295.

\bibitem{KN06}R. Kallabis and M. Neumann, An exponential inequality under weak dependence, Bernoulli~12 (2006), pp. 333--350.

\bibitem{Leh66} E. Lehmann, Some concepts of dependence, Ann. Math. Statist. 37 (1966), pp. 1137--1153.

\bibitem{L09} L. Liu, Precise large deviations for dependent random variables with heavy tails, Statist. Probab. Letters 79 (2009), pp. 1290--1298.

\bibitem{New80}C. Newman, Normal fluctuations and the FKG inequalities, Comm. Math Phys. 74 (1980), pp.~119--128.

\bibitem{New84} C. Newman, Asymptotic independence and limit theorems for positively and negatively dependent random variables, In: Y.~Tong (ed.) Inequalities in statistics and probability, vol.~5, pp.~127--140, Inst. Math. Statist., Hayward, CA (1984)

\bibitem{Oli05} P.E. Oliveira, An exponential inequality for associated variables, Statist. Probab. Letters~73 (2005), pp. 189--197.

\bibitem{Oli12} P.E. Oliveira, Asymptotics for Associated Random Variables, Springer, Heidelberg, 2012.

\bibitem{PR12} B.L.S. Prakasa Rao, Associated sequences, demimartingales and nonparametric inference, Birkh\"{a}user/Springer, Basel AG, Basel, 2012.

\bibitem{Rio2013} E. Rio, Inequalities and limit theorems for weakly dependent sequences, (2017, January 30) https://hal.archives-ouvertes.fr/cel-00867106/document, 2013.

\bibitem{Sung07} S. Sung, A note on the exponential inequality for associated random variables, Statist. Probab. Letters 77 (2007), pp. 1730--1736.

\bibitem{WWG13} K. Wang, Y. Wang and Q. Gao, Uniform asymptotics for the finite-time ruin probability of a new dependent risk model with a constant interest rate, Method. Comput. Appl. Probab.~15 (2013), pp. 109--124.

\bibitem{YSY08} S. Yang, C. Su and K. Yu, A general method to the strong law of large numbers and its applications, Statist. Probab. Letters 78 (2008), pp. 794-803.






\end{thebibliography}
\end{document}